\theoremstyle{plain} 
\newtheorem{thm}{Theorem}[section]
\newtheorem{lem}[thm]{Lemma}
\newtheorem{prop}[thm]{Proposition}
\newtheorem{cor}[thm]{Corollary}
\theoremstyle{definition}
\newtheorem{defn}[thm]{Definition}
\theoremstyle{remark}
\newtheorem{rem}[thm]{Remark}
\newtheoremstyle{TheoremNum}
        {\topsep}{\topsep}              
        {\itshape}                      
        {}                              
        {\bfseries}                     
        {.}                             
        { }                             
        {\thmname{#1}\thmnote{ \bfseries #3}}
    \theoremstyle{TheoremNum}
    \newtheorem{thmn}{Theorem}
\DeclareMathOperator{\id}{id}
\DeclareMathOperator{\tor}{Tor}
\DeclareMathOperator{\torord}{Tord}
\DeclareMathOperator{\torlen}{TorLen}
\DeclareMathOperator{\T}{T}
\DeclareMathOperator{\tf}{tf}
\DeclareMathOperator{\ord}{o}
\DeclareMathOperator{\fac}{fc}
\title{$X$-torsion and universal groups}
\author{Maurice Chiodo, Zachiri McKenzie}
\date{\today}
\begin{document}

\let\thefootnote\relax\footnotetext{2010 \textit{AMS Classification:} 20F05, 03D55, 03C98.}
\let\thefootnote\relax\footnotetext{\textit{Keywords:} Groups, torsion, torsion quotients, embeddings, universal Horn theory.}
\let\thefootnote\relax\footnotetext{This work is part of a project that has received funding from the European Union's Horizon 2020 research and innovation programme under the Marie Sk\l{}odowska-Curie grant agreement No.~659102.}

\begin{abstract}
For a set $X\subseteq \mathbb{N}$, we define the \emph{$X$-torsion} of a group $G$ to be all elements $g\in G$ with $g^{n}=e$ for some $n\in X$. With $X$ recursively enumerable, we give two independent proofs (group-theoretic, and model-theoretic) that there exists a \emph{universal} finitely presented $X$-torsion-free group; one which contains \emph{all} finitely presented $X$-torsion-free groups. We also show that, if $X$ is recursively enumerable, then the set of finite presentations of $X$-torsion-free groups is $\Pi_{2}^{0}$-complete in Kleene's arithmetic hierarchy.
\end{abstract}

\maketitle

\section{Introduction}

Torsion is a well-studied object in group theory. We let $\ord(g)$ denote the order of a group element $g$.  Recalling that $g\in G$ is \emph{torsion} if $1\leq \ord(g) <\omega$, we write  $\tor(G):=\{g \in G\ |\ g \textnormal{ is torsion}\}$.   So, what if we were to `restrict' the type of torsion we are looking at? Perhaps we are only concerned with $2$-torsion, or torsion elements of prime order. So, for any set $X \subseteq \mathbb{N}$, we define the \emph{$X$-torsion} of a group $G$, written $\tor^{X}(G)$, as
\[
 \tor^{X}(G):=\{g \in G \ | \ \exists n \in X \textnormal{ with } g^{n}=e\}
\]
It is clear that, for the case $X=\mathbb{N}$, we have $\tor^{\mathbb{N}}(G)= \tor(G)$ in the usual sense. For any set  ${X \subseteq \mathbb{N}}$, we define the \emph{factor completion} of $X$ to be $X^{\fac} := \{ n \in \mathbb{N} \:|\: \exists m \geq 1,$ $nm \in X \}$, and we say $X$ is \emph{factor complete} if ${X^{\fac} = X}$. We say that a group $G$ is \emph{$X$-torsion-free} if $\tor^{X}(G)=\{e\}$; equivalently, if $\torord(G)\cap X^{\fac}=\emptyset$. 
We use the following notation for the set of orders of torsion elements in a group:
\[
\torord(G):=\{n \in \mathbb{N}\ |\ \exists g \in \tor(G) \textnormal{ with } \ord(g)=n\geq 2\}
\]

One famous consequence of the Higman Embedding Theorem \cite{Hig} is the fact that there is a \emph{universal} finitely presented (f.p.)~group; that is, a finitely presented group into which all finitely presented groups embed. Recently, Belegradek in \cite{BS08} and Chiodo in \cite{Chiodo3} independently showed that there exists a universal f.p.~torsion-free group; that is, an f.p.~torsion-free group into which all f.p.~torsion-free groups embed. In this paper we generalise this result further, in the context of $X$-torsion-freeness, as follows:
\\\mbox{}

\begin{thmn}[\ref{uni tor free}]
Let $X$ be a recursively enumerable set of integers. Then there is a universal finitely presented $X$-torsion-free group $G$. That is, $G$ is $X$-torsion-free, and for any finitely presented group $H$ we have that $H \hookrightarrow G$ if (and only if) $H$ is $X$-torsion-free.
\end{thmn}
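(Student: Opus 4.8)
The plan is to adapt the construction of a universal finitely presented torsion-free group (cf.\ \cite{Chiodo3}) to the $X$-torsion setting: first I would build a \emph{recursively presented} $X$-torsion-free group $U$ that already contains an isomorphic copy of every finitely presented $X$-torsion-free group, and then embed $U$ into a finitely presented group without creating torsion of any new order. The main tool is an ``$X$-torsion-killing'' process. Given a recursive presentation of a group $K$, set $K_{0}:=K$, $K_{n+1}:=K_{n}/\langle\langle\tor^{X}(K_{n})\rangle\rangle$, and $K_{\omega}:=\varinjlim_{n}K_{n}$. Since $X$ is recursively enumerable and recursively presented groups have recursively enumerable word problem, $\tor^{X}(K_{n})$ is recursively enumerable, so $K_{n+1}$, and hence $K_{\omega}$, is recursively presented, uniformly in the given presentation of $K$. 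The crucial point is that $K_{\omega}$ is $X$-torsion-free: if $g\in K_{\omega}$ satisfies $g^{m}=e$ with $m\in X$, lift $g$ to some $h\in K_{n}$; then $h^{m}$ becomes trivial at some finite stage $K_{k}$ with $k\geq n$, so the image of $h$ in $K_{k}$ lies in $\tor^{X}(K_{k})$ and is therefore killed in $K_{k+1}$, forcing $g=e$. If $K$ is itself $X$-torsion-free then $K_{n}=K$ for every $n$, so $K\cong K_{\omega}$; more generally the natural map $K\to K_{\omega}$ is injective exactly when $K$ is $X$-torsion-free.

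Next, fix a recursive enumeration $P_{1},P_{2},\ldots$ of all finite presentations and put $U:=(P_{1})_{\omega}\ast(P_{2})_{\omega}\ast\cdots$. By the previous paragraph $U$ is a recursively presented group on countably many generators, and by the Kurosh subgroup theorem every torsion element of a free product is conjugate into a free factor; as each $(P_{i})_{\omega}$ is $X$-torsion-free, so is $U$. If $H$ is a finitely presented $X$-torsion-free group, pick some $P_{i}$ presenting $H$; then $H\cong(P_{i})_{\omega}\hookrightarrow U$. Hence $U$ contains a copy of every finitely presented $X$-torsion-free group, and the problem is reduced to embedding $U$ into a finitely presented $X$-torsion-free group.

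For this, I would first embed $U$ into a $2$-generator recursively presented group via the classical Higman--Neumann--Neumann embedding of a countable group into a $2$-generator group; since that construction only uses free products with free groups and HNN extensions over free subgroups, it changes neither $\torord$ nor recursive presentability. I would then apply a version of Higman's embedding theorem that introduces no torsion of new orders, embedding the result into a finitely presented group $G$ with $\torord(G)=\torord(U)$. Since $\torord(U)\cap X^{\fac}=\emptyset$, the group $G$ is $X$-torsion-free, and as $U\hookrightarrow G$ it contains every finitely presented $X$-torsion-free group; this is the ``if'' direction. The ``only if'' direction is immediate, since a subgroup of an $X$-torsion-free group is $X$-torsion-free.

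I expect the last step to be the main obstacle: the usual Higman embedding theorem gives no control over the torsion of the overgroup, so one needs the sharper fact that every recursively presented group embeds into a finitely presented group with the same set of orders of torsion elements. Establishing this torsion-sensitive embedding, or quoting a suitable refinement of the known constructive versions of Higman's theorem, is where the real work lies; the remainder is routine manipulation of directed colimits and recursive presentations. (A second, model-theoretic proof is also available: the class of $X$-torsion-free groups is closed under free products and axiomatised by the recursively enumerable set of universal sentences $\{\,\forall x\,(x^{n}=e\rightarrow x=e):n\in X\,\}$, from which one can derive the existence of a universal finitely presented member by model-theoretic means; cf.\ \cite{BS08}.)
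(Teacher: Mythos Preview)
Your proposal is correct and follows essentially the same route as the paper: form the free product of the universal $X$-torsion-free quotients (your $(P_i)_\omega$ is precisely the paper's $\overline{P_i}/\tor^X_\omega(\overline{P_i})$), then apply a torsion-order-preserving version of Higman's embedding theorem, which the paper states as Theorem~\ref{tor emb} (citing \cite{Chiodo1,Rot}) and which is exactly the ``main obstacle'' you anticipate. The intermediate $2$-generator embedding you insert is unnecessary since Theorem~\ref{tor emb} already accepts countably generated recursive presentations, and the paper enumerates all such presentations rather than just the finite ones, but these are inessential variations.
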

\mbox{}

There are two key steps in proving this theorem. The first is to construct a free product of all such groups, in an algorithmic way:
\\\mbox{}

\begin{thmn}[\ref{uni cbl tor free}]
Let $A$ be an r.e.~set of integers. Then there is a countably generated recursive presentation $Q$ of a group $\overline{Q}$ which is $A$-torsion-free, and contains an embedded copy of every countably generated recursively presentable $A$-torsion-free group.
\end{thmn}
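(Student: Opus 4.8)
The plan is to build the group $\overline{Q}$ as an infinite free product $\overline{Q} = \ast_{i\in\mathbb{N}} G_i$, where $(G_i)_{i\in\mathbb{N}}$ ranges over (a recursive enumeration of) all countably generated recursively presented $A$-torsion-free groups, and then to argue that this free product is itself recursively presented and $A$-torsion-free. First I would recall that there is a recursive enumeration $(P_i)_{i\in\mathbb{N}}$ of all recursive presentations on the fixed countable generating set $\{x_1,x_2,\ldots\}$; this is a standard bookkeeping fact, since a recursive presentation is given by a (total) recursive function enumerating relators, and we can dovetail over all Turing machines. The naive free product $\ast_i \langle P_i\rangle$ then has an obvious countably generated recursive presentation: take generators $\{x^{(i)}_j : i,j\in\mathbb{N}\}$ and relators $\bigcup_i \{\text{relators of } P_i \text{ rewritten in the } x^{(\cdot)}_j\}$, which is recursively enumerable, hence (after the usual trick of padding an r.e.\ set to a recursive one, or appealing to Craig's trick) recursive. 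By standard properties of free products, $\overline{Q}$ contains each $\langle P_i\rangle$ as a subgroup.

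The second, and genuinely substantive, step is to show that $\overline{Q}$ is $A$-torsion-free \emph{provided each factor is}. Here I would use the description of torsion in free products (Kurosh / the structure theorem for free products): every torsion element of a free product $\ast_i G_i$ is conjugate to a torsion element of one of the factors $G_i$, and in particular has order equal to the order of some torsion element in some $G_i$. Thus $\torord(\overline{Q}) = \bigcup_i \torord(G_i)$. Since each $G_i$ is $A$-torsion-free, $\torord(G_i)\cap A^{\fac}=\emptyset$ for every $i$, and therefore $\torord(\overline{Q})\cap A^{\fac}=\emptyset$; that is, $\overline{Q}$ is $A$-torsion-free. (This is precisely why passing to a free product, rather than some other amalgam, is the right move: free products do not create new torsion.)

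The remaining point is the completeness of the family $(G_i)$: I must check that \emph{every} countably generated recursively presentable $A$-torsion-free group $H$ is isomorphic to some $\langle P_i\rangle$ in the enumeration and hence embeds in $\overline{Q}$. This is immediate once the enumeration $(P_i)$ is taken to list \emph{all} recursive presentations (not merely the $A$-torsion-free ones, which one cannot recursively recognise): any such $H$ has, by hypothesis, \emph{some} recursive presentation, which appears as $P_i$ for some $i$, and that $H = \langle P_i \rangle \hookrightarrow \overline{Q}$. The one subtlety to be careful about is that $\overline{Q}$ is built from $\langle P_i\rangle$ for \emph{all} $i$, including presentations of groups that are \emph{not} $A$-torsion-free — but this is harmless, because in fact we should only throw into the free product those $G_i$ that are $A$-torsion-free. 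Since $A$-torsion-freeness is not decidable, we cannot do this directly; instead I would either (a) enumerate the family with repetitions so that each $A$-torsion-free recursively presented group appears, while still keeping the presentation of the free product recursive — replacing any "bad" factor by the trivial group is not recursive either — or, more cleanly, (b) observe that we do \emph{not} need to exclude the bad factors: we only claimed $\overline{Q}$ contains every countably generated recursively presentable $A$-torsion-free group, so including extra (possibly non-$A$-torsion-free) factors would ruin $A$-torsion-freeness of $\overline{Q}$. Hence option (a) is forced, and the main obstacle is precisely this: arranging a recursive presentation of $\ast_i G_i$ in which the index set ranges only over $A$-torsion-free groups. I expect to resolve this by a Rabin-style argument: for each $i$, attach to the $i$-th factor a recursively-presented "trivialising" gadget that collapses $G_i$ to the trivial group if and only if an r.e.\ search detects a witness of $A$-torsion in $G_i$, so that every factor contributes either a faithful copy of an $A$-torsion-free $G_i$ or nothing, while the total presentation stays recursive; combined with the free-product torsion structure above, this yields the claimed $\overline{Q}$.
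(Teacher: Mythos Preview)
Your overall architecture---free product over an enumeration of all recursive presentations, plus the fact that free products create no new torsion---is exactly the paper's, and you correctly isolate the one real obstacle: you cannot include every $\overline{P_i}$ (that destroys $A$-torsion-freeness of $\overline{Q}$), yet you cannot recursively filter out the bad factors.

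The gap is in your proposed ``Rabin-style trivialising gadget''. You want a uniform recursive presentation that yields $\overline{P_i}$ when $\overline{P_i}$ is $A$-torsion-free and the trivial group otherwise. But this dichotomy encodes the predicate ``$\overline{P_i}$ has non-trivial $A$-torsion'', and a witness to that is a word $w$ and an $n\in A$ with $w^n =_{\overline{P_i}} e$ \emph{and} $w\neq_{\overline{P_i}} e$; the second conjunct is only co-r.e. Indeed Theorem~\ref{pi2} shows $A$-torsion-freeness is $\Pi^0_2$-complete, so its negation is genuinely $\Sigma^0_2$, not $\Sigma^0_1$, and no r.e.\ search can fire exactly on the factors with $A$-torsion. (If you drop the condition $w\neq e$, then \emph{every} group has the ``witness'' $w=e$, and your gadget trivialises everything.)

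The paper sidesteps the dichotomy entirely: rather than deciding whether $\overline{P_i}$ has $A$-torsion, it replaces each factor by its \emph{universal $A$-torsion-free quotient} $\overline{P_i}^{A\text{-}\tf}$, obtained by iterating ``quotient by the normal closure of all current $A$-torsion elements'' through $\omega$ stages (Definition~\ref{defofXtorsion}, Corollary~\ref{tf quot}). Each stage adds only relations $w=e$ for words with $w^n=e$ for some $n\in A$, which \emph{is} r.e.\ (no need to test $w\neq e$), so the whole $\omega$-iteration is uniformly recursively presentable (Lemmas~\ref{tor re}, \ref{tor i re}, Proposition~\ref{tkill}). The resulting quotient is always $A$-torsion-free, and when $\overline{P_i}$ was already $A$-torsion-free it is just $\overline{P_i}$ again. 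Setting $Q:=\ast_i P_i^{A\text{-}\tf}$ then gives exactly the group you were aiming for. Your instinct to ``kill the bad part'' of each factor was right; what was missing is that one must kill iteratively rather than all-or-nothing.
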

\mbox{}

From this, we can construct a finitely presented example, using the following theorem of Higman:
\\\mbox{}

\begin{thmn}[\ref{tor emb}]
There is a uniform algorithm that, on input of a countably generated recursive presentation $P=\langle X | R \rangle$, constructs a finite presentation $\T(P)$ such that $\overline{P}\hookrightarrow \overline{\T(P)}$ and  $\torord(\overline{P}) =\torord(\overline{\T(P)})$, along with an explicit embedding $\overline{\phi}: \overline{P} \hookrightarrow \overline{\T(P)}$.
\end{thmn}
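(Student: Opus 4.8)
The plan is to obtain $\T(P)$ from a \emph{constructive} proof of the Higman Embedding Theorem and then upgrade that proof so as to keep track of torsion orders. The usual proofs of Higman's theorem are already uniform in the input: given a countably generated recursive presentation $P=\langle X\mid R\rangle$, one can algorithmically produce a finite presentation $\T(P)$ together with an explicit injective homomorphism $\overline{\phi}\colon\overline P\to\overline{\T(P)}$. So the only genuinely new content is the equality $\torord(\overline P)=\torord(\overline{\T(P)})$. One inclusion is immediate: since $\overline{\phi}$ embeds $\overline P$ into $\overline{\T(P)}$, every order of a torsion element of $\overline P$ is realised in $\overline{\T(P)}$, giving $\torord(\overline P)\subseteq\torord(\overline{\T(P)})$. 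The task is therefore to show that the construction produces no torsion elements of new order.

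For this I would first record the relevant facts about torsion in combinatorial constructions. If $G=A\ast B$, or $G=A\ast_C B$ is an amalgamated free product, or $G$ is an HNN extension with base group $B$, then every element of $G$ of finite order is conjugate into a factor, respectively into $B$; this is immediate from the normal form theorems (Britton's Lemma and its amalgam analogue), since a cyclically reduced element of syllable length at least one has infinite order. Hence $\torord(A\ast B)=\torord(A)\cup\torord(B)$, $\torord(A\ast_C B)=\torord(A)\cup\torord(B)$, and the torsion orders of an HNN extension coincide with those of its base. In particular $\torord$ is unchanged by free or direct products with torsion-free (e.g.\ free) groups, and it is monotone under embeddings. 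An easy induction on the length of a construction then shows: any group built from $\overline P$ together with finitely many torsion-free groups, using finitely many free products, amalgamated free products, HNN extensions and passages to subgroups, has $\torord$ equal to $\torord(\overline P)$.

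It remains to check that a suitable effective proof of Higman's theorem assembles $\T(P)$ precisely by such a tower of operations. First one effectively embeds the countably generated group $\overline P$ into a finitely generated recursively presented group by the Higman--Neumann--Neumann trick $x_i\mapsto b^{-i}ab^i$, which amounts to a free product with $\mathbb Z$ followed by a single HNN extension. Then one runs Higman's benign-subgroup machinery, in which every group introduced --- the auxiliary groups witnessing that the relevant subgroups are benign, and the final finitely presented over-group --- is built from free groups and previously constructed groups using only free products with free groups, amalgamated free products and HNN extensions. Tracing a fixed such construction and applying the facts above at each stage yields $\torord(\overline{\T(P)})=\torord(\overline P)$, with the whole procedure uniform in $P$ and with $\overline{\phi}$ read off from the construction.

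The main obstacle is organisational rather than conceptual: one must commit to one concrete effective proof of the Higman Embedding Theorem and verify, step by step, that the key embedding lemmas and the explicit realisations of benign subgroups use only torsion-order-preserving operations, keeping careful track of the amalgamating and associated subgroups. No step of the standard proofs introduces torsion of genuinely new order, so nothing beyond bookkeeping is needed; in fact this torsion-preserving refinement of Higman's theorem is in essence already carried out in \cite{Chiodo3}, and the same argument applies here.
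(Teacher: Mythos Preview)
Your outline is essentially correct, but note that the paper does not give a proof of this statement at all: it simply records that the result ``is implicit in Rotman's proof \cite[Theorem 12.18]{Rot} of the Higman Embedding Theorem'' and defers the details to \cite[Lemma 6.9 and Theorem 6.10]{Chiodo1} (not \cite{Chiodo3}, as you write). So there is no proof in the paper to compare against; your write-up is precisely the bookkeeping argument that underlies the cited reference, namely that every step of a fixed constructive Higman embedding (the HNN trick to get finite generation, then the benign-subgroup machinery built from free products, amalgams and HNN extensions over free groups) preserves $\torord$ by the normal-form/Britton's Lemma facts you state. That is the intended approach, and nothing is missing conceptually; the only caveat is the one you already flag, that the verification must be pinned to one explicit proof of Higman's theorem and carried through each lemma.
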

\mbox{}

We prove Theorem \ref{uni cbl tor free} in the following two independent ways, for reasons which we outline shortly. Firstly, in Section \ref{sec:group_proof}, we generalise the construction in \cite[Theorem 3.10]{Chiodo3} of a universal f.p.~torsion-free group, using arguments in group theory. Then, in Section \ref{sec:model_proof}, we generalise the construction in \cite[Theorem A.1]{BS08} of a universal f.p.~torsion-free group, using arguments in model theory. The results in Section \ref{sec:model_proof} provide the framework for proving that there might be other group properties for which there exist universal f.p.~examples. Our main result in Section \ref{sec:model_proof}, which we apply directly to the theory of $X$-torsion-free groups to prove Theorem \ref{uni cbl tor free}, is the following:
\\\mbox{}

\begin{thmn}[\ref{Th:GeneralUniversalGroupExistenceTheorem}]
If $T \supseteq T_{\mathrm{Grp}}$ is an r.e.~universal Horn $\mathcal{L}_{\mathrm{Grp}}$-theory then there exists a recursively presented group $G \models T$ such that every recursively presented group $H \models T$ embeds into $G$.  
\end{thmn}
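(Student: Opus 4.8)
The plan is to abstract Belegradek's model-theoretic argument \cite[Theorem A.1]{BS08}, replacing the theory of torsion-free groups by the given r.e.\ universal Horn theory $T$; write $\mathcal{Q}:=\mathrm{Mod}(T)$ for the class of groups satisfying $T$. The starting observation is that, being a universal Horn class, $\mathcal{Q}$ is a quasivariety --- closed under isomorphism, substructures, and products of set-indexed families --- and hence \emph{reflective}: every group $A$ has a $\mathcal{Q}$-reflection $\eta_A\colon A\to A^{\mathcal{Q}}$, realised concretely as $A^{\mathcal{Q}}:=A/N$ with $N:=\bigcap\{\,M\trianglelefteq A: A/M\in\mathcal{Q}\,\}$ (so that $A/N\hookrightarrow\prod_M A/M$ lies in $\mathcal{Q}$), universal among homomorphisms from $A$ into members of $\mathcal{Q}$. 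I will want two facts about this. The \emph{retract trick}: if $\iota\colon K\hookrightarrow A$ has a retraction $r\colon A\to K$ and $K\in\mathcal{Q}$, then $r$ factors as $r=\bar r\,\eta_A$ through the reflection, whence $\bar r\,(\eta_A\iota)=\mathrm{id}_K$ and $\eta_A\iota\colon K\to A^{\mathcal{Q}}$ is an embedding. And the \emph{effectivity claim}: if $A$ is recursively presented then so is $A^{\mathcal{Q}}$, uniformly --- this will be the technical heart, and the main obstacle.

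Granting the effectivity claim, here is the rest. Fix a recursive enumeration $(P_i)_{i\in\mathbb{N}}$ of all recursive presentations of groups, arranged (by relabelling) to have pairwise disjoint generating sets, and let $U:=\langle Y\mid S\rangle$ be the free product of all the groups $\overline{P_i}$; this is again a recursive presentation ($Y$ the recursive union of the generating sets, $S$ the r.e.\ union of the relator sets), and for each $i$ the free-factor inclusion $\overline{P_i}\hookrightarrow U$ admits the retraction $U\to\overline{P_i}$ killing every other factor. Set $G:=U^{\mathcal{Q}}$. Then $G\models T$, and $G$ is recursively presented by the effectivity claim. If $\overline{P_i}\models T$, then $\overline{P_i}$ is a retract of $U$ lying in $\mathcal{Q}$, so the retract trick gives $\overline{P_i}\hookrightarrow G$. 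Since every recursively presented $H\models T$ has a recursive presentation, which appears as some $P_i$, whence $H\cong\overline{P_i}\models T$, we conclude that every recursively presented $H\models T$ embeds into $G$, as required.

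To prove the effectivity claim (uniformly; I state it for $U$), I would expand to the recursive language $\mathcal{L}':=\mathcal{L}_{\mathrm{Grp}}\cup\{c_y:y\in Y\}$ with one constant per generator, read each relator $s\in S$ as the atomic $\mathcal{L}'$-sentence ``$s=e$'', and form the r.e.\ theory $T\cup\{s=e:s\in S\}$. Since first-order provability from an r.e.\ axiom set is r.e., the set $W$ of all words $w$ over $Y$ with $T\cup\{s=e:s\in S\}\vdash(w=e)$ is recursively enumerable, so $\langle Y\mid W\rangle$ is a recursive presentation; moreover a routine check (using that $T\supseteq T_{\mathrm{Grp}}$ proves $u=e\wedge u'=e\to uu'=e$ and $u=e\to g u g^{-1}=e$, so that the set of such $w$ is a normal subgroup) shows $\langle\langle W\rangle\rangle$ is exactly that set. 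It then remains to identify $\overline{\langle Y\mid W\rangle}$ with $U^{\mathcal{Q}}$. For the universal property: a homomorphism from $\overline{\langle Y\mid W\rangle}$ to a group $B$ is just a map $Y\to B$ trivialising every $w\in W$; this trivialises every $s\in S$ (as $S\subseteq W$), and conversely if $B\models T$ then, by soundness, the corresponding expansion of $B$ models $T\cup\{s=e:s\in S\}$ and so trivialises every $w\in W$ --- thus for $B\in\mathcal{Q}$ these homomorphisms correspond exactly to homomorphisms $U\to B$, which is the universal property of $\eta_U$. For membership in $\mathcal{Q}$: an axiom of $T$ reads $\forall\bar x\,(v_1(\bar x)=e\wedge\cdots\wedge v_n(\bar x)=e\to v(\bar x)=e)$ or $\forall\bar x\,\neg(v_1(\bar x)=e\wedge\cdots\wedge v_n(\bar x)=e)$ (every atomic $\mathcal{L}_{\mathrm{Grp}}$-formula being an equation ``$v=e$'' modulo $T_{\mathrm{Grp}}$); if words $\bar g$ over $Y$ make each $v_j(\bar g)$ trivial in $\overline{\langle Y\mid W\rangle}$, i.e.\ $T\cup\{s=e:s\in S\}\vdash v_j(\bar g)=e$ for all $j$, then instantiating the axiom at $\bar g$ and feeding in these derivations yields $T\cup\{s=e:s\in S\}\vdash v(\bar g)=e$ in the first case, making $v(\bar g)$ trivial too, and a contradiction with the consistency of $T\cup\{s=e:s\in S\}$ (which holds --- interpret every $c_y$ as $e$) in the second.

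This last step is exactly where the \emph{Horn} shape of $T$ is indispensable: a non-Horn axiom would not let us propagate provability of a conclusion from already-derived hypotheses, so neither the reflection-with-embeddings picture nor the effectivity would survive. I expect this verification, together with pinning down the coding so that $U$ and $\langle Y\mid W\rangle$ are genuinely (uniformly) recursive presentations, to be the only delicate points; everything else is formal.
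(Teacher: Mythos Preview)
Your argument is correct and essentially the same as the paper's: both take the $T$-reflection of the free product of all recursively presented groups and show it is recursively presented by identifying its relations with the atomic $\mathcal{L}_{\mathrm{Grp},Y}$-consequences of $T\cup\{s=e:s\in S\}$ (your set $W$ is exactly the paper's $\Phi'$, and your soundness/completeness check is the paper's equation~(\ref{Eq:Equation1})). The packaging differs---you speak of quasivarieties and reflections and verify $\overline{\langle Y\mid W\rangle}\models T$ by hand from the Horn shape, whereas the paper invokes Hodges' $\mathbb{K}$-presentation framework (Lemmas~\ref{Th:ConditionForPresents} and~\ref{Th:ConditionAdmitsPresentations}) to get $G^\pi\in\mathbb{K}$ automatically---and your retract trick makes the embedding of each $\overline{P_i}\models T$ into $G$ more explicit than the paper's terse ``$G_\tau=G^\tau$, and so $G_\tau$ embeds into $G^\pi$''.
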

\mbox{}

The reason for providing both proofs of Theorem \ref{uni cbl tor free} is twofold. On the one hand, the group-theoretic proof in Section \ref{sec:group_proof} is more direct, and gives a clear picture of \emph{why} Theorem \ref{uni cbl tor free} holds. It is explicit, and algorithmic. On the other hand, the model-theoretic proof in Section \ref{sec:model_proof} is more general, and might lead to showing that other group properties possess universal f.p.~examples. Indeed, it was only by looking at the model-theoretic arguments in \cite[Theorem A.1]{BS08} that we realised we could consider $X$-torsion as an object, and that the results of \cite{Chiodo3} could be generalised to this; we would never have made the connection otherwise.

On this, it would be interesting further work to see what other group-theoretic properties $\rho$ possess universal f.p.~examples. A potential proof technique would be as follows:
\begin{enumerate}
 \item\label{en:horn} Show that the property $\rho$ satisfies the conditions of Theorem \ref{Th:ConditionAdmitsPresentations} (that is, closed under free products, identity, and subgroups).
 \item\label{en:re.horn} Show that $\rho$ has an r.e.~universal Horn $\mathcal{L}_{\mathrm{Grp}}$-theory, thus allowing us to apply Theorem \ref{Th:GeneralUniversalGroupExistenceTheorem}.
 \item\label{en:HNN} Show that $\rho$ is possessed by finitely generated free groups, and preserved by free products and HNN extensions (which is what is needed to show that it is preserved under the Higman embedding of Theorem \ref{tor emb}).
\end{enumerate}

Of course, satisfying all of the above is quite difficult. One can easily find many group properties that satisfy (\ref{en:horn}), and some of these also satisfy (\ref{en:re.horn}) quite trivially. But there are \emph{very few} properties of groups that satisfy (\ref{en:HNN}), and therein lies the problem. 

We finish our paper with a generalisation a result from \cite{Chiodo3} on the complexity of recognising torsion-freeness, and show the following:
\\\mbox{}

\begin{thmn}[\ref{pi2}]
Let $X\subseteq \mathbb{N}_{\geq 2}$ be a non-empty r.e.~set of integers. Then the set of finite presentations of $X$-torsion-free groups is $\Pi^{0}_{2}$-complete.
\end{thmn}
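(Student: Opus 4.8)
The plan is to show membership in $\Pi^0_2$ and hardness separately. For membership, recall that $G = \overline{\langle x_1,\dots,x_k \mid r_1,\dots,r_m\rangle}$ is $X$-torsion-free exactly when no word $w$ over the generators represents a nontrivial element of finite order $n$ with $n \in X^{\fac}$. Since $X$ is r.e., so is $X^{\fac}$ (search for $m\ge 1$ with $nm\in X$). Being $X$-torsion-free is then naturally phrased as
\[
\forall w\,\forall n\; \bigl[\,(n\in X^{\fac} \wedge w^n =_G e)\ \Rightarrow\ w =_G e\,\bigr].
\]
The predicate ``$w =_G e$'' is $\Sigma^0_1$ (search for a van Kampen diagram / product of conjugates of relators), and ``$n\in X^{\fac}$'' is $\Sigma^0_1$; so the bracketed implication has the form $(\Sigma^0_1 \wedge \Sigma^0_1)\Rightarrow \Sigma^0_1$, which is $\Pi^0_1 \vee \Sigma^0_1$. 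Bounded by the outer $\forall w\,\forall n$, the whole statement collapses to $\Pi^0_2$: it is $\forall w\,\forall n\,(\exists \text{(proof }w^n=e\text{)} \Rightarrow \exists(\text{proof }w=e))$, i.e. $\forall\,(\Sigma^0_1 \to \Sigma^0_1) = \forall\exists \cdots = \Pi^0_2$. I would write this out carefully, being explicit about how the $\Sigma^0_1$ relation $w^n=_G e$ depends uniformly on the finite presentation.

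For $\Pi^0_2$-hardness, I would reduce from a known $\Pi^0_2$-complete problem. The natural choice, following \cite{Chiodo3}, is the set of finite presentations of groups with \emph{trivial} $X$-torsion viewed against the complexity of, say, $\mathrm{TOT}$ (the index set of total recursive functions) or directly the set of recursive presentations with some $\Pi^0_2$ behaviour. Concretely: fix $n_0\in X$ with $n_0\ge 2$ (non-emptiness is used here, and $X\subseteq\mathbb N_{\ge2}$ ensures $n_0\ge 2$). Given a $\Pi^0_2$ predicate $\forall i\,\exists j\, R(e,i,j)$ with $R$ recursive, I want to build — uniformly and algorithmically — a finite presentation $P_e$ such that $\overline{P_e}$ is $X$-torsion-free iff $\forall i\,\exists j\,R(e,i,j)$ holds. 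The idea is to start from a countably generated recursive presentation $Q_e$ with generators $t_1, t_2,\dots$ in which $t_i$ is killed (made trivial, or made to satisfy $t_i^{n_0}=e$ while staying nontrivial otherwise) precisely at the stage $j$ witnessing $R(e,i,j)$, if such a stage exists; if $\exists j\,R(e,i,j)$ fails for some $i$, then the corresponding $t_i$ survives as an element whose order lies in $X$ (or we arrange a $\mathbb Z/n_0$ factor), destroying $X$-torsion-freeness. Then apply Theorem \ref{tor emb} to pass from $Q_e$ to a finite presentation $\T(Q_e)$ with the same $\torord$, hence the same $X$-torsion-freeness status, and the map $e\mapsto \T(Q_e)$ is the desired many-one reduction.

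The main obstacle is the hardness direction — specifically, engineering the recursive presentation $Q_e$ so that the \emph{only} possible source of $X$-torsion is the intended one. One must ensure that introducing the relators that eventually trivialise each $t_i$ does not accidentally create other finite-order elements whose order falls in $X^{\fac}$, and conversely that when some $t_i$ is never trivialised it genuinely contributes an element of order in $X$ rather than, say, collapsing the group or having infinite order. A clean way to control this is to build $Q_e$ as a free product indexed by $i$: the $i$-th factor is $\langle t_i\rangle$ with a relator $t_i$ (trivial factor) added as soon as a witness $j$ appears, and otherwise the $i$-th factor is left as $\mathbb Z$ — but then $X$-torsion-freeness would always hold, so instead the surviving factor must be $\mathbb Z/n_0\mathbb Z$, which forces us to add $t_i^{n_0}$ from the start and then kill $t_i$ outright when the witness appears; one checks a free product of copies of $\mathbb Z/n_0$ and the trivial group is $X$-torsion-free iff every factor is trivial, i.e. iff $\forall i\,\exists j\,R(e,i,j)$. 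I would verify (using that $X^{\fac}$ contains $n_0$ and that torsion in a free product is conjugate into a factor) that $\torord(\overline{Q_e})\cap X^{\fac}=\emptyset$ precisely in that case, and that Theorem \ref{tor emb} preserves this. The bookkeeping to make $Q_e$ a genuinely \emph{recursive} (uniformly in $e$) presentation, with the relator-introduction schedule computable, is routine but needs to be stated.
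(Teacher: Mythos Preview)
Your proposal is correct and follows essentially the same route as the paper: for membership, the $\Pi^0_2$ description $\forall w\,\forall n\,[(n\in X \wedge w^n=_G e) \Rightarrow w=_G e]$ (your detour through $X^{\fac}$ is unnecessary but harmless); for hardness, a free product of copies of $\mathbb{Z}/n_0\mathbb{Z}$ indexed by $i$, with the generator $t_i$ killed when an r.e.\ witness appears, then pushed through the torsion-preserving Higman embedding. The only cosmetic difference is that the paper reduces from the specific $\Pi^0_2$-complete index set $\{n : W_n=\mathbb{N}\}$ rather than from a generic $\Pi^0_2$ predicate, yielding exactly your presentation $P_n=\langle x_1,x_2,\ldots \mid x_i^{a}=e\ \forall i,\ x_j=e\ \forall j\in W_n\rangle$.
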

\mbox{}

This is remarkable; even if $X$ is a finite set, or stronger still, a single prime (say $X=\{2\}$), the set of finite presentations of groups with no $X$-torsion will \emph{still} form a $\Pi^{0}_{2}$-complete set.

\section{$X$-torsion and universality}\label{sec:group_proof}

This section is a generalisation of the definitions and results in  \cite[Section 3]{Chiodo3} on universal torsion-free quotients and universal finitely presented torsion-free groups.

\subsection{Notation}\mbox{}

If $P$ is a group presentation, we denote by $\overline{P}$ the group presented by $P$, and $\overline{w}$ by the group element represented by the word $w$. A presentation $P=\langle X|R\rangle$ is said to be a \emph{recursive presentation} if $X$ is a finite set and $R$ is a recursive enumeration of relations; $P$ is said to be a \emph{countably generated recursive presentation} if instead $X$ is a recursive enumeration of generators. 
A group $G$ is said to be \emph{finitely} (respectively,   \emph{recursively}) \emph{presentable} if $G\cong \overline{P}$ for some finite (respectively,   recursive) presentation $P$. 
If $P,Q$ are group presentations then we denote their free product presentation by $P*Q$: this is given by taking the disjoint union of their generators and relations. If $g_{1}, \ldots, g_{n}$ are elements of a group $G$, we write $\langle   g_{1}, \ldots, g_{n} \rangle$ for the subgroup in $G$ generated by these elements and $\llangle g_{1}, \ldots, g_{n} \rrangle^{G}$ for the normal closure of these elements in $G$. Let $\omega$ denote the smallest infinite ordinal.   
Let $|X|$ denote the cardinality of a set $X$. If $X$ is a set, let $X^{-1}$ be a set of the same cardinality as and disjoint from $X$ along with a fixed bijection ${*}^{-1}: X \to X^{-1}$. Write $X^{*}$ for the set of finite words on $X \cup X^{-1}$. We will make use of $\Sigma_{n}^{0}$ sets and $\Pi_{n}^{0}$ sets; see \cite{Rogers} for an introduction to these.

\subsection{$X$-torsion}\mbox{}

If $G,H$ are groups, and $X$ a set of integers with $H$ $X$-torsion-free, a surjective homomorphism $h: G \twoheadrightarrow H$ is \emph{universal} if, for any  $X$-torsion-free $K$ and any homomorphism $f:G \to K$, there is a homomorphism $\phi:H \to K$ such that $f=\phi \circ h: G \to K$, i.e., the following diagram commutes:
\begin{displaymath}
\xymatrix{
G \ar[r]^{h} \ar[dr]_{f} & H \ar[d]^{\phi} \\
 & K
}
\end{displaymath}
Note that if $\phi$ exists then it will be unique. Indeed, if $\phi'$ also satisfies $f=\phi' \circ h$, then $\phi \circ h = \phi' \circ h$, and hence $\phi=\phi'$ as $h$ is a surjection and thus is right-cancellative. Moreover, any such $H$ is unique, up to isomorphism. Such an $H$ is called the \emph{universal $X$-torsion-free quotient} for $G$, denoted $G^{X-\tf}$. Observe that if $G$ is itself $X$-torsion-free, then $G^{X-\tf}$ exists and $G^{X-\tf} \cong G$, as the identity map $\id_{G}: G \to G$ has the universal property above.

A standard construction, showing that $G^{X-\tf}$ exists for every group $G$, is done via taking the quotient of $G$ by its \emph{$X$-torsion-free radical} $\rho^{X}(G)$, where $\rho^{X}(G)$ is the intersection of all normal subgroups $N\vartriangleleft G$ with $G/N$ $X$-torsion-free (a generalisation of the \emph{torsion-free radical}, $\rho(G)$, in \cite{BrodHow}). It follows immediately that $G/\rho^{X}(G)$ has all the properties of an $X$-torsion-free universal quotient for $G$. 

We present here an alternative construction for $G^{X-\tf}$ which, though isomorphic to $G/\rho^{X}(G)$, lends itself more easily to an effective procedure for finitely (or recursively) presented groups

\begin{defn} \label{defofXtorsion}
Given a group $G$, and a set of integers $X \subseteq \mathbb{N}$, we inductively define $\tor^{X}_{n}(G)$ as follows:
\[
\tor^{X}_{0}(G):=\{e\},
\]
\[
\tor^{X}_{n+1}(G):=\llangle\ \{g \in G\ |\ g\tor^{X}_{n}(G) \in \tor^{X} \big( G/\tor^{X}_{n}(G)\big) \}\ \rrangle ^{G},
\]
\[
\tor^{X}_{\omega}(G):=\bigcup_{n \in \mathbb{N}}\tor^{X}_{n}(G).
\]
Thus, $\tor^{X}_{i}(G)$ is the set of elements of $G$ which are annihilated upon taking $i$ successive quotients of $G$ by the normal closure of all $X$-torsion elements, and $\tor^{X}_{\omega}(G)$ is the union of all these. 
\end{defn}

\begin{lem}\label{infinity tf}
If $G$ is a group, then $G/\tor^{X}_{\omega}(G)$ is $X$-torsion-free.
\end{lem}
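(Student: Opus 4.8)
The plan is to show that every element of $G/\tor^{X}_{\omega}(G)$ whose order lies in $X$ is trivial, by a rank/ordinal argument on the construction of $\tor^{X}_{\omega}$. Suppose $g \in G$ is such that its image $\bar g$ in $G/\tor^{X}_{\omega}(G)$ satisfies $\bar g^{\,n} = \bar e$ for some $n \in X$; we must show $\bar g = \bar e$, i.e.\ $g \in \tor^{X}_{\omega}(G)$. Unwinding the quotient, $g^{n} \in \tor^{X}_{\omega}(G) = \bigcup_{m\in\mathbb{N}} \tor^{X}_{m}(G)$. The key observation is that the chain $\tor^{X}_{0}(G) \subseteq \tor^{X}_{1}(G) \subseteq \cdots$ is increasing (each $\tor^{X}_{n+1}$ contains the preimage of $\{e\}$, hence contains $\tor^{X}_{n}$), so there is a single $m$ with $g^{n} \in \tor^{X}_{m}(G)$.

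Now I would work in the group $\bar G_m := G/\tor^{X}_{m}(G)$. Write $h$ for the image of $g$ in $\bar G_m$; then $h^{n} = e$ in $\bar G_m$ with $n \in X$, so by definition $h \in \tor^{X}\big(\bar G_m\big)$, and therefore $h$ lies in the generating set of $\tor^{X}_{m+1}(G)/\tor^{X}_{m}(G) = \llangle \{ x : x\tor^{X}_m(G) \in \tor^{X}(\bar G_m)\}\rrangle^{G}/\tor^{X}_m(G)$. Pulling this back, $g \in \tor^{X}_{m+1}(G) \subseteq \tor^{X}_{\omega}(G)$, which is exactly what we wanted. Thus $\bar g = \bar e$ in $G/\tor^{X}_{\omega}(G)$, and since $\bar g^{\,n}=\bar e$ with $n\in X$ was arbitrary, $\tor^{X}\big(G/\tor^{X}_{\omega}(G)\big) = \{\bar e\}$.

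The only point requiring a little care — and the one I'd expect to be the mild ``obstacle'' — is the interchange between ``order of $\bar g$ divides some $n \in X$'' and ``$\bar g^{\,n} = \bar e$ for some $n \in X$'': these are literally the same statement by the definition of $\tor^{X}$, so no appeal to factor-completeness of $X$ is needed here (that subtlety only enters when comparing $X$-torsion-freeness with the condition $\torord(G)\cap X^{\fac}=\emptyset$, which is a separate remark in the text). A second small point is the passage from $g^{n}\in\tor^{X}_{\omega}(G)$ to a single finite stage $m$; this uses only that $\mathbb N$-indexed unions of a nested family stabilize any fixed element at a finite stage, i.e.\ the monotonicity of the filtration $\{\tor^{X}_{n}(G)\}_{n}$, which I'd record explicitly as a one-line sub-claim before the main argument. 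Everything else is a direct unwinding of Definition \ref{defofXtorsion}.
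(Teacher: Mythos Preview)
Your proof is correct and follows essentially the same route as the paper's: pick $g$ with $g^{n}\in\tor^{X}_{\omega}(G)$ for some $n\in X$, drop to a finite stage $\tor^{X}_{m}(G)$, observe that the image of $g$ in $G/\tor^{X}_{m}(G)$ is $X$-torsion, and conclude $g\in\tor^{X}_{m+1}(G)\subseteq\tor^{X}_{\omega}(G)$. One tiny remark: the monotonicity of the filtration is not actually needed to pass from $g^{n}\in\bigcup_{m}\tor^{X}_{m}(G)$ to $g^{n}\in\tor^{X}_{m}(G)$ for some $m$ --- that is immediate from the definition of union --- so your ``sub-claim'' is harmless but superfluous.
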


\begin{proof}
Suppose $g\tor^{X}_{\omega}(G) \in \tor^{X} \big(G/\tor^{X}_{\omega}(G)\big)$. Then $g^{n}\tor^{X}_{\omega}(G) =e$ in $G/\tor^{X}_{\omega}(G)$ for some $1\leq n \in X$, so $g^{n} \in \tor^{X}_{\omega}(G)$. Thus there is some $i \in \mathbb{N}$ such that $g^{n} \in \tor^{X}_{i}(G)$, and hence $g\tor^{X}_{i}(G) \in \tor^{X}\big(G/\tor^{X}_{i}(G)\big)$. Thus $g \in \tor^{X}_{i+1}(G) \subseteq \tor^{X}_{\omega}(G)$, and so $g\tor^{X}_{\omega}(G) =e$ in $G/\tor^{X}_{\omega}(G)$.
\end{proof}

\begin{prop}
If $G$ is a group, then $\rho^{X}(G) = \tor^{X}_{\omega}(G)$.
\end{prop}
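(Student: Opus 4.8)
The plan is to prove the two inclusions $\tor^X_\omega(G) \subseteq \rho^X(G)$ and $\rho^X(G) \subseteq \tor^X_\omega(G)$ separately, using the defining universal-style properties of each side.

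For the inclusion $\rho^X(G) \subseteq \tor^X_\omega(G)$: by Lemma~\ref{infinity tf}, the quotient $G/\tor^X_\omega(G)$ is $X$-torsion-free, so $\tor^X_\omega(G)$ is one of the normal subgroups $N \vartriangleleft G$ with $G/N$ $X$-torsion-free. Since $\rho^X(G)$ is defined as the intersection of \emph{all} such subgroups, we immediately get $\rho^X(G) \subseteq \tor^X_\omega(G)$. This direction is essentially free.

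For the reverse inclusion $\tor^X_\omega(G) \subseteq \rho^X(G)$: since $\tor^X_\omega(G) = \bigcup_n \tor^X_n(G)$, it suffices to show $\tor^X_n(G) \subseteq \rho^X(G)$ for every $n$, which I would do by induction on $n$. The base case $\tor^X_0(G) = \{e\} \subseteq \rho^X(G)$ is trivial. For the inductive step, assume $\tor^X_n(G) \subseteq \rho^X(G)$. Let $N \vartriangleleft G$ be any normal subgroup with $G/N$ $X$-torsion-free; I want to show $\tor^X_{n+1}(G) \subseteq N$. Since $\tor^X_{n+1}(G)$ is the normal closure in $G$ of the set $S := \{g \in G \mid g\tor^X_n(G) \in \tor^X(G/\tor^X_n(G))\}$, and $N$ is normal, it is enough to show $S \subseteq N$. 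Take $g \in S$, so $g^k \in \tor^X_n(G)$ for some $k \in X$ with $k \geq 1$. By the inductive hypothesis $\tor^X_n(G) \subseteq \rho^X(G) \subseteq N$ (using the already-established direction, or directly: $\rho^X(G) \subseteq N$ since $N$ is one of the intersected subgroups), so $g^k \in N$, i.e.\ $gN$ has order dividing some element of $X$ in $G/N$. Hence $gN \in \tor^X(G/N) = \{e\}$ because $G/N$ is $X$-torsion-free, so $g \in N$. Thus $S \subseteq N$, hence $\tor^X_{n+1}(G) \subseteq N$; as $N$ was arbitrary, $\tor^X_{n+1}(G) \subseteq \rho^X(G)$.

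**The only subtle point**, and the one to state carefully, is the handling of the exponent: an element $gN$ of $G/N$ being killed by some $k \in X$ with $k \geq 1$ means $gN \in \tor^X(G/N)$ by definition of $X$-torsion, and $X$-torsion-freeness of $G/N$ then forces $gN = e$ — one must make sure the "$n \geq 1$" and "$n \in X$" conditions line up with the definition of $\tor^X$, but there is no real obstacle here. I expect the main (minor) obstacle is just bookkeeping: being careful that in the inductive step one uses $\tor^X_n(G) \subseteq N$ for \emph{every} valid $N$, which follows from the inductive hypothesis $\tor^X_n(G) \subseteq \rho^X(G)$ together with $\rho^X(G) \subseteq N$. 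Putting the two inclusions together gives $\rho^X(G) = \tor^X_\omega(G)$.
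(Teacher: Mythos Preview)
Your proof is correct and follows essentially the same approach as the paper. The paper proves $\tor^X_\omega(G) \subseteq \rho^X(G)$ by contradiction via a minimal index $i$ with $\tor^X_i(G) \nsubseteq N$, which is just the contrapositive packaging of your direct induction on $n$; the key step in both is identical, namely that $g^k \in \tor^X_{i-1}(G) \subseteq N$ for some $k \in X$ forces $g \in N$ by $X$-torsion-freeness of $G/N$.
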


\begin{proof}
Clearly $\rho^{X}(G) \subseteq \tor^{X}_{\omega}(G) $, by definition of $\rho^{X}(G)$ and the fact that $G/\tor^{X}_{\omega}(G)$ is torsion-free (Lemma \ref{infinity tf}). It remains to show that $\tor^{X}_{\omega}(G) \subseteq \rho^{X}(G)$. We proceed by contradiction, so assume $\tor^{X}_{\omega}(G) \nsubseteq \rho^{X}(G)$. Then there is some $N\vartriangleleft G$ with $G/N$ $X$-torsion-free, along with some minimal $i$ such that $\tor^{X}_{i}(G) \nsubseteq N$ (clearly, $i>0$, as $\tor^{X}_{0}(G) =\{e\}$). Then, by definition of $\tor^{X}_{i}(G)$ and the fact that $N$ is normal, there exists $e \neq g \in \tor^{X}_{i}(G)$ such that $g\tor^{X}_{i-1}(G) \in \tor^{X} \big( G/\tor^{X}_{i-1}(G)\big)$ and $g \notin N$ (or else $\tor^{X}_{i}(G) \subseteq N$). But then $g^{n} \in \tor_{i-1}(G)$ for some $1<n\in X$. Since $\tor^{X}_{i-1}(G) \subseteq N$ by minimality of $i$, we have that $gN$ is a (non-trivial) $X$-torsion element of $G/N$, contradicting $X$-torsion-freeness of $G/N$. Hence $\tor^{X}_{\omega}(G) \subseteq \rho^{X}(G)$.
\end{proof}

\begin{cor}\label{tf quot}
If $G$ is a group, then $G/\tor^{X}_{\omega}(G)\cong G^{X-\tf}$. 
\end{cor}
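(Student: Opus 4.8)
The plan is to read the statement off from the two results immediately preceding it. The discussion before Definition \ref{defofXtorsion} already records that the quotient $G/\rho^{X}(G)$ by the $X$-torsion-free radical has all the properties of the universal $X$-torsion-free quotient, so $G^{X-\tf} \cong G/\rho^{X}(G)$; indeed $G/\rho^{X}(G)$ is $X$-torsion-free essentially by construction, and the universal factorisation is immediate because any homomorphism from $G$ to an $X$-torsion-free group has kernel containing some $N \vartriangleleft G$ with $G/N$ $X$-torsion-free, hence containing $\rho^{X}(G)$. The Proposition just established identifies $\rho^{X}(G)$ with $\tor^{X}_{\omega}(G)$. Composing these, $G/\tor^{X}_{\omega}(G) = G/\rho^{X}(G) \cong G^{X-\tf}$, which is the assertion; uniqueness of $G^{X-\tf}$ up to isomorphism (noted when it was defined) is what lets us phrase the conclusion as an isomorphism.

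If one instead wanted a self-contained argument not routing through $\rho^{X}(G)$, one would verify the universal property directly for the canonical surjection $h\colon G \twoheadrightarrow G/\tor^{X}_{\omega}(G)$. Lemma \ref{infinity tf} already supplies that the target is $X$-torsion-free. For factorisation, given an $X$-torsion-free group $K$ and a homomorphism $f\colon G \to K$, one shows by induction on $n$ that $\tor^{X}_{n}(G) \subseteq \kernel f$: the case $n=0$ is trivial, and in the inductive step any $g$ with $g\,\tor^{X}_{n}(G) \in \tor^{X}(G/\tor^{X}_{n}(G))$ satisfies $g^{m} \in \tor^{X}_{n}(G) \subseteq \kernel f$ for some $m \in X$, whence $f(g)^{m} = e$ in the $X$-torsion-free group $K$ and so $f(g)=e$; since $\kernel f$ is normal it contains the whole normal closure, giving $\tor^{X}_{n+1}(G) \subseteq \kernel f$. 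Taking the union over $n$ yields $\tor^{X}_{\omega}(G) \subseteq \kernel f$, so $f$ factors through $h$, and the factor map is unique by the general remark made when $G^{X-\tf}$ was introduced.

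There is essentially no obstacle here: the only genuine content — that passing to an $X$-torsion-free quotient annihilates all of $\tor^{X}_{\omega}(G)$ — is precisely what the Proposition encapsulates, so the shortest route is simply to cite it together with the description of $G^{X-\tf}$ as $G/\rho^{X}(G)$.
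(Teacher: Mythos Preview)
Your proposal is correct and matches the paper's approach exactly: the paper states this corollary without proof, since it follows immediately by combining the preceding Proposition ($\rho^{X}(G)=\tor^{X}_{\omega}(G)$) with the earlier observation that $G/\rho^{X}(G)$ is the universal $X$-torsion-free quotient. Your first paragraph is precisely this argument, and the alternative direct verification you sketch is a perfectly valid (if unnecessary) unwinding of the same content.
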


What follows is a standard result, which we state without proof.

\begin{lem}\label{words}
Let $P=\langle X | R \rangle$ be a countably generated recursive presentation. Then the set of words $\{w \in X^{*}|\ \overline{w} =e \textnormal{ in } \overline{P}\}$ is r.e. 
\end{lem}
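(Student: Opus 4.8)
The plan is to exhibit ``$\overline{w}=e$ in $\overline{P}$'' as a $\Sigma^{0}_{1}$ condition by unwinding the definition of the normal closure and then dovetailing. Write $F=F(X)$ for the free group on the generators and $N=\llangle R\rrangle^{F}$ for the normal closure of the relators in $F$, so that $\overline{P}\cong F/N$. Then $\overline{w}=e$ in $\overline{P}$ if and only if the freely reduced form of $w$ lies in $N$, and $v\in N$ if and only if $v$ is freely equal in $F$ to a finite product $u_{1}r_{j_{1}}^{\varepsilon_{1}}u_{1}^{-1}\cdots u_{k}r_{j_{k}}^{\varepsilon_{k}}u_{k}^{-1}$ with $u_{i}\in X^{*}$, each $r_{j_{i}}$ a relator occurring in the enumeration $R$, and $\varepsilon_{i}\in\{\pm 1\}$.

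Next I would describe the semi-decision procedure witnessing that the set is r.e. Dovetail over all $k\in\mathbb{N}$, all $k$-tuples $(u_{1},\dots,u_{k})$ of elements of $X^{*}$ (effectively enumerable since $X$ is given as a recursive enumeration of generators), all $k$-tuples of relators obtained by running the enumeration of $R$ for finitely many steps, all sign vectors in $\{\pm 1\}^{k}$, and all words $w\in X^{*}$. For each such tuple, form the word $u_{1}r_{j_{1}}^{\varepsilon_{1}}u_{1}^{-1}\cdots u_{k}r_{j_{k}}^{\varepsilon_{k}}u_{k}^{-1}$, freely reduce both it and $w$, and output $w$ whenever the two freely reduced words coincide. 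Free reduction of a word and equality testing of reduced words are plainly decidable, so every step of this dovetailing is effective.

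Finally I would verify correctness. Any $w$ that is output is freely equal to a product of conjugates of relators of $P$, hence $\overline{w}=e$ in $\overline{P}$. Conversely, if $\overline{w}=e$ in $\overline{P}$, then $w$ is freely equal to some such product; each relator involved eventually appears in the enumeration of $R$, so at a sufficiently late stage the procedure considers precisely this data paired with $w$ and outputs it. Hence the enumerated set is exactly $\{w\in X^{*}\mid \overline{w}=e\textnormal{ in }\overline{P}\}$, which is therefore r.e. The only point requiring any attention — and the reason the hypothesis is a \emph{recursive} (countably generated) presentation rather than an arbitrary one — is that $X$ and $R$ are supplied as recursive enumerations, so that the dovetailing can actually be carried out; beyond this bookkeeping there is no obstacle, which is why the statement is quoted as standard.
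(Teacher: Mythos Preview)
Your argument is correct and is exactly the standard proof one would give: unwind membership in the normal closure $\llangle R\rrangle^{F(X)}$ as a $\Sigma^{0}_{1}$ condition and dovetail over finite products of conjugates of enumerated relators. The paper itself states this lemma without proof, explicitly calling it ``a standard result, which we state without proof'', so there is nothing to compare against; your write-up supplies precisely the routine justification the authors chose to omit.
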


\begin{lem}\label{tor re}
Let $P=\langle X | R \rangle$ be a countably generated recursive presentation, and $A$ an r.e.~set of integers. Then the set of words $\{w \in X^{*}|\ \overline{w} \in \tor^{A}(\overline{P})\}$ is r.e.
\end{lem}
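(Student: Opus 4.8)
The plan is to express the condition ``$\overline{w} \in \tor^A(\overline{P})$'' as an existential quantifier over a decidable-in-the-limit (in fact, r.e.) predicate, and then invoke closure of $\Sigma^0_1$ under projection. By definition, $\overline{w} \in \tor^A(\overline{P})$ exactly when there exists $n \in A$ with $\overline{w}^{\,n} = e$ in $\overline{P}$; equivalently, with $w^n$ (the word $w$ concatenated with itself $n$ times) representing the identity in $\overline{P}$. So the set we want is
\[
\{ w \in X^* \ |\ \exists n\, (n \in A \ \wedge\ \overline{w^n} = e \textnormal{ in } \overline{P}) \}.
\]

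First I would note that $A$ is r.e.\ by hypothesis, so $\{n \ |\ n \in A\}$ is r.e.; and by Lemma \ref{words}, the set $W := \{v \in X^* \ |\ \overline{v} = e \textnormal{ in } \overline{P}\}$ is r.e. Next, the map $(w,n) \mapsto w^n$ is a total recursive function $X^* \times \mathbb{N} \to X^*$ (here I use that $X$ is a recursive enumeration of generators, so $X^*$ has a decidable, effectively enumerable structure and concatenation is computable). Hence the predicate
\[
R(w,n) \ :\equiv\ \big(n \in A\big) \ \wedge\ \big(w^n \in W\big)
\]
is r.e.\ as a subset of $X^* \times \mathbb{N}$, being the intersection of the preimage of an r.e.\ set under a recursive function with an r.e.\ set. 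Finally, the set in the statement is precisely the projection $\{w \ |\ \exists n\, R(w,n)\}$, which is therefore r.e.\ by the standard fact that $\Sigma^0_1$ is closed under existential number quantification. Concretely, one dovetails: enumerate pairs $(w,n)$, and for each run the semi-decision procedures for $n \in A$ and for $w^n \in W$ in parallel across all pairs, outputting $w$ whenever both halt.

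There is no real obstacle here; the only point requiring a moment's care is that we must quantify over $n \in A$ rather than over $n \in A^{\fac}$ or the order of $\overline{w}$ --- but this is harmless, since $\overline{w} \in \tor^A(\overline{P})$ holds iff $\overline{w}^{\,n} = e$ for \emph{some} $n \in A$ directly, matching the definition of $\tor^A$, so no factor-completion is needed. One should also observe that we do not (and cannot, in general) decide whether $\overline{w^n} = e$; we only semi-decide it, which is all that is required for an r.e.\ conclusion.
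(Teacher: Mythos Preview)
Your proof is correct and follows essentially the same approach as the paper: both unwind the definition to the condition $\exists n\,(n\in A \wedge \overline{w^n}=e)$, invoke Lemma~\ref{words} for the inner predicate, and then dovetail over pairs $(w,n)$; you simply phrase the dovetailing in the language of $\Sigma^0_1$ closure properties, while the paper describes it concretely as ``proceeding along finite diagonals.''
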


\begin{proof}
Take any recursive enumeration $\{w_{1}, w_{2}, \ldots \}$ of $X^{*}$. Using Lemma \ref{words}, start checking if $\overline{w}_{i}^{n}=e$ in $\overline{P}$ for each $w_{i} \in X^{*}$ and each $n \in A$ (by proceeding along finite diagonals). For each $w_{i}$ we come across which is $A$-torsion, add it to our enumeration. This procedure will enumerate all words in $\tor^{A}(\overline{P})$, and only words in $\tor^{A}(\overline{P})$. Thus the set of words in $X^{*}$ representing elements in $\tor^{A}(\overline{P})$ is r.e.
\end{proof}

We use this to show the following:

\begin{lem}\label{tor i re}\label{enum Tor i}
Given a countably generated recursive presentation $P=\langle X | R \rangle$, and r.e.~set of integers $A$, the set $T^{A}_{i}:=\{w \in X^{*}|\ \overline{w}\in \tor^{A}_{i}(\overline{P}) \}$ is r.e., uniformly over all $i$ and all such presentations $P$. Moreover, the union $T^{A}_{\omega}:=\bigcup T^{A}_{i}$ is r.e., and is precisely the set $\{w \in X^{*}|\ \overline{w} \in \tor^{A}_{\omega}(\overline{P}) \}$.
\end{lem}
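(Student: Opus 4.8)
The plan is to prove Lemma \ref{tor i re} by induction on $i$, using the fact that at each stage we are taking the $X$-torsion of a recursively presented quotient and then a normal closure, both of which preserve recursive enumerability in a uniform way.

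First I would set up the base case: $T^A_0 = \{w \in X^* \mid \overline{w} = e \text{ in } \overline{P}\}$, which is r.e. by Lemma \ref{words}, and this is clearly uniform in $P$. For the inductive step, suppose $T^A_i$ is r.e., witnessed by an enumeration that can be produced uniformly from (an index for) $P$ and the value $i$. The quotient group $\overline{P}/\tor^A_i(\overline{P})$ has a countably generated recursive presentation $P_i$ obtained from $P$ by adjoining, as new relators, all the words enumerated into $T^A_i$; since $T^A_i$ is r.e. uniformly, this presentation is recursive uniformly in $P$ and $i$. Now apply Lemma \ref{tor re} to $P_i$ and the r.e. set $A$: the set of words representing elements of $\tor^A(\overline{P_i}) = \tor^A(\overline{P}/\tor^A_i(\overline{P}))$ is r.e., uniformly. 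Finally, $T^A_{i+1}$ is the set of words representing elements of the normal closure in $\overline{P}$ of that set together with $T^A_i$; taking the normal closure amounts to passing to a further recursive presentation (adjoin those words as relators to $P$) and invoking Lemma \ref{words} once more. Each of these operations is effective and uniform, so by induction $T^A_i$ is r.e. uniformly over all $i$ and all such $P$.

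For the moreover clause, since the construction is uniform in $i$, there is a single partial recursive procedure that on input $(i, w)$ halts iff $w \in T^A_i$; dovetailing this over all $i \in \mathbb{N}$ enumerates $T^A_\omega = \bigcup_i T^A_i$, which is therefore r.e. That $T^A_\omega$ equals $\{w \in X^* \mid \overline{w} \in \tor^A_\omega(\overline{P})\}$ is immediate from the definition $\tor^A_\omega(\overline{P}) = \bigcup_n \tor^A_n(\overline{P})$ in Definition \ref{defofXtorsion}, since $\overline{w} \in \tor^A_\omega(\overline{P})$ iff $\overline{w} \in \tor^A_i(\overline{P})$ for some $i$ iff $w \in T^A_i$ for some $i$.

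The main obstacle, and the point requiring the most care, is the \emph{uniformity} bookkeeping: I need to be sure that the index for the enumeration of $T^A_{i+1}$ can be computed from the index for $T^A_i$ (and from the fixed data $P$, $A$) by a total recursive function, so that the recursion unwinds to give a single procedure handling all $i$ simultaneously. This is what legitimizes the dovetailing in the moreover clause. Concretely, one chases through Lemmas \ref{words}, \ref{tor re} and the ``adjoin r.e. many relators'' construction checking that each produces its output index effectively from its input index; none of these steps is deep, but the statement ``uniformly over all $i$ and all such presentations $P$'' is exactly the hypothesis that makes the induction go through and that the later applications of this lemma will rely on, so it is worth stating the uniform version of each ingredient explicitly rather than just the existential version.
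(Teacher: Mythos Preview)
Your proposal is correct and follows essentially the same inductive approach as the paper's proof: at each stage form a recursive presentation of $\overline{P}/\tor^{A}_{i}(\overline{P})$, apply Lemma~\ref{tor re}, and take a normal closure. The paper's version is terser (it starts the induction at $i=1$ and dispatches the ``moreover'' clause with ``the rest of the lemma then follows immediately''), whereas you spell out the uniformity bookkeeping and the dovetailing explicitly; this extra care is justified, since uniformity in $i$ is precisely what is used downstream.
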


\begin{proof}
We proceed by induction. Clearly $\tor^{A}_{1}(\overline{P})$ is r.e., as it is the normal closure of $\tor^{A}(\overline{P})$, which is r.e.~by Lemma \ref{tor re}. So assume that $\tor^{A}_{i}(\overline{P})$ is r.e.~for all $i \leq n$. Then $\tor^{A}_{n+1}(\overline{P})$ is the normal closure of $\tor^{A}(\overline{P}/\tor^{A}_{n}(\overline{P}))$, which again is r.e.~by the induction hypothesis and Lemma \ref{tor re}. The rest of the lemma then follows immediately.
\end{proof}

\begin{prop}\label{tkill}
There is a uniform algorithm that, on input of a countably generated recursive presentation $P=\langle X | R \rangle$ of a group $\overline{P}$, and an r.e.~set of integers $A$,  outputs a countably generated recursive presentation $P^{A-\tf}=\langle X | R' \rangle$ (on the same generating set $X$, and with $R\subseteq R'$ as sets) such that $\overline{P^{A-\tf}} \cong \overline{P}^{A-\tf}$, with associated surjection given by extending $\id_{X}: X \to X$.
\end{prop}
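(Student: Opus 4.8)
The plan is to build $R'$ by starting with $R$ and iteratively throwing in, as new relators, all words that witness $X$-torsion modulo what has already been killed — i.e.\ to recursively enumerate the set $T^{A}_{\omega}$ of Lemma~\ref{tor i re} and declare its elements to be relators. Concretely, set $R' := R \cup T^{A}_{\omega}$ (as a set of words over $X$), and let $P^{A-\tf} := \langle X \mid R'\rangle$. Since $R$ is an r.e.\ set of relations and $T^{A}_{\omega}$ is r.e.\ uniformly in $P$ and $A$ by Lemma~\ref{tor i re}, the union $R'$ is r.e.\ uniformly, so $P^{A-\tf}$ is a countably generated recursive presentation produced by a uniform algorithm, on the same generating set $X$, with $R \subseteq R'$ as required.

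It remains to identify the group $\overline{P^{A-\tf}}$. By construction, the normal closure of $R'$ in $F(X)$ is exactly the preimage of $\tor^{A}_{\omega}(\overline{P})$ under the quotient map $F(X) \twoheadrightarrow \overline{P}$: every new relator in $T^{A}_{\omega}$ maps into $\tor^{A}_{\omega}(\overline{P})$, and conversely $\tor^{A}_{\omega}(\overline{P})$ is generated (as a normal subgroup of $\overline{P}$) by the images of words in $T^{A}_{\omega}$. Hence the natural surjection obtained by extending $\id_{X}: X \to X$ descends to an isomorphism
\[
\overline{P^{A-\tf}} = F(X)/\llangle R'\rrangle^{F(X)} \;\cong\; \overline{P}/\tor^{A}_{\omega}(\overline{P}),
\]
and the latter is isomorphic to $\overline{P}^{A-\tf}$ by Corollary~\ref{tf quot}. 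This gives the claimed isomorphism $\overline{P^{A-\tf}}\cong\overline{P}^{A-\tf}$, with associated surjection $\overline{P}\twoheadrightarrow\overline{P^{A-\tf}}$ induced by $\id_{X}$.

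The main obstacle — really the only point requiring care — is the bookkeeping that $T^{A}_{\omega}$, defined in Lemma~\ref{tor i re} as a set of \emph{words}, is used here as a set of \emph{relators}, and that adding these relators does not kill anything extra beyond $\tor^{A}_{\omega}(\overline{P})$. For the latter one notes that $\tor^{A}_{n}$ is built layer by layer as a normal closure of $A$-torsion elements, and that adjoining relators for $\tor^{A}_{n}(\overline{P})$ yields precisely the presentation $\langle X \mid R \cup T^{A}_{n}\rangle$ of $\overline{P}/\tor^{A}_{n}(\overline{P})$; passing to the union over $n$ commutes with taking the quotient since $\tor^{A}_{\omega} = \bigcup_n \tor^{A}_{n}$. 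One should also remark that the universal property / uniqueness of $G^{X-\tf}$ (already established before Corollary~\ref{tf quot}) guarantees the surjection is the universal one, so no separate verification of the universal property is needed.
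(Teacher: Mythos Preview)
Your proof is correct and follows exactly the same approach as the paper: define $P^{A-\tf}:=\langle X \mid R \cup T^{A}_{\omega}\rangle$, invoke Lemma~\ref{tor i re} for recursive enumerability, and invoke Corollary~\ref{tf quot} to identify the resulting group with $\overline{P}^{A-\tf}$. Your version simply spells out the bookkeeping (that adjoining $T^{A}_{\omega}$ as relators kills exactly $\tor^{A}_{\omega}(\overline{P})$ and nothing more) that the paper leaves implicit.
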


\begin{proof}
By Corollary \ref{tf quot}, $\overline{P}^{A-\tf}$ is the group $\overline{P}/\tor^{A}_{\omega}(\overline{P})$. Then, with the notation of Lemma \ref{tor i re}, it can be seen that $P^{A-\tf}:=\langle X | R \cup T_{\omega} \rangle$ is a countably generated recursive presentation for $\overline{P}^{A-\tf}$, uniformly constructed from $P$.
\end{proof}

\subsection{Universality and complexity of $X$-torsion}\mbox{}

With the above machinery, we can now prove the main technical result of this section; Theorem \ref{uni cbl tor free}. We will re-prove this result again in Section \ref{sec:model_proof}, using tools from model theory.

\begin{thm}\label{uni cbl tor free}
Let $A$ be an r.e.~set of integers. Then there is a countably generated recursive presentation $Q$ of a group $\overline{Q}$ which is $A$-torsion-free, and contains an embedded copy of every countably generated recursively presentable $A$-torsion-free group.
\end{thm}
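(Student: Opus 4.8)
The plan is to build $Q$ by first forming a single recursive presentation that amalgamates all countably generated recursive presentations, then applying the $A$-torsion-free radical construction of Proposition~\ref{tkill}, and finally verifying the universality property using the universal quotient map. The key observation is that there are only countably many countably generated recursive presentations, and they can be enumerated effectively: fix a recursive enumeration $P_1, P_2, \ldots$ of all countably generated recursive presentations $P_i = \langle X_i \mid R_i \rangle$ (using a standard enumeration of Turing machines to index the recursive enumerations of generators and relations, with generator sets taken to be disjoint). Form the free product presentation $P := \ast_{i \in \mathbb{N}} P_i$; on the disjoint union of generating sets this is again a countably generated recursive presentation, constructed uniformly. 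Then set $Q := P^{A-\tf}$, the presentation produced by Proposition~\ref{tkill}, so that $\overline{Q} \cong \overline{P}^{A-\tf}$ is $A$-torsion-free by Corollary~\ref{tf quot}, and $Q$ is a countably generated recursive presentation.

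It remains to show every countably generated recursively presentable $A$-torsion-free group $H$ embeds into $\overline{Q}$. Such an $H$ is isomorphic to $\overline{P_i}$ for some $i$. There is an obvious embedding $\iota_i: \overline{P_i} \hookrightarrow \overline{P}$ coming from the free product (a free factor embeds into a free product). Composing with the universal surjection $h: \overline{P} \twoheadrightarrow \overline{P}^{A-\tf} = \overline{Q}$ gives a homomorphism $h \circ \iota_i: \overline{P_i} \to \overline{Q}$; the point is to show this composite is injective. Since $\overline{P_i} \cong H$ is itself $A$-torsion-free, the identity map $\id: \overline{P_i} \to \overline{P_i}$ factors through its own universal $A$-torsion-free quotient, i.e. $\overline{P_i}^{A-\tf} \cong \overline{P_i}$. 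Now the inclusion $\iota_i$ followed by $h$ kills exactly $\iota_i^{-1}(\rho^X(\overline{P}))$; I claim this is trivial. Indeed, $\overline{Q} = \overline{P}/\rho^A(\overline{P})$, and the free factor $\overline{P_i}$ meets $\rho^A(\overline{P})$ trivially because one can retract: there is a homomorphism $r_i: \overline{P} \to \overline{P_i}$ collapsing all other free factors to the identity, with $r_i \circ \iota_i = \id_{\overline{P_i}}$; since $\overline{P_i}$ is $A$-torsion-free, $r_i$ factors through $\overline{P}^{A-\tf}$, giving $\overline{r}_i : \overline{Q} \to \overline{P_i}$ with $\overline{r}_i \circ (h \circ \iota_i) = r_i \circ \iota_i = \id_{\overline{P_i}}$. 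Hence $h \circ \iota_i$ is injective, so $H \hookrightarrow \overline{Q}$.

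The converse direction — that every group embedding into $\overline{Q}$ is $A$-torsion-free — is immediate, since $\overline{Q}$ is $A$-torsion-free and $A$-torsion-freeness is inherited by subgroups (an element of order $n \in A^{\fac}$ in a subgroup has the same order in the ambient group).

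I expect the main obstacle to be the bookkeeping in the first step: one must be careful that the enumeration $P_1, P_2, \ldots$ of countably generated recursive presentations is genuinely effective and that the infinite free product $\ast_i P_i$ is presented as a \emph{single} countably generated recursive presentation (the generators and relations must be laid out on a recursively enumerable index set in a way compatible with Proposition~\ref{tkill}); this is routine but needs to be stated carefully. The algebraic heart — the retraction argument showing $h \circ \iota_i$ is injective — is then short, relying only on the universal property of the $A$-torsion-free quotient and the fact that free factors admit retractions.
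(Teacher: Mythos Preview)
Your proof is correct and follows the same overall strategy as the paper: enumerate all countably generated recursive presentations, combine them via free product, and use the universal $A$-torsion-free quotient of Proposition~\ref{tkill}. The one difference is the order of operations. The paper forms $Q := P_1^{A-\tf} * P_2^{A-\tf} * \cdots$, taking the $A$-torsion-free quotient of each factor \emph{before} the free product; you form $P := \ast_i P_i$ first and then set $Q := P^{A-\tf}$. Your order forces you to supply the retraction argument (factor $r_i$ through $\overline{Q}$ via the universal property) to see that each $A$-torsion-free $\overline{P_i}$ still embeds after the global quotient, whereas in the paper's order the embedding is immediate (free factors embed) and $A$-torsion-freeness of $\overline{Q}$ follows from the elementary fact that a free product of $A$-torsion-free groups is $A$-torsion-free. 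Both routes are short; the paper's is marginally more direct, while yours makes explicit (via the retraction) why the two orders agree. One typo: you write $\rho^X(\overline{P})$ where you mean $\rho^A(\overline{P})$.
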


\begin{proof}
Take an enumeration $P_{1}, P_{2}, \ldots$ of all countably generated recursive presentations of groups, and construct the countably generated recursive presentation $Q:=P_{1}^{A-\tf}* P_{2}^{A-\tf}* \ldots$; this is the countably infinite free product of the universal $A$-torsion-free quotient of all countably generated recursively presentable groups (with some repetition). 
As each $P_{i}^{A-\tf}$ is uniformly constructible from $P_{i}$ (by Proposition \ref{tkill}), we have that our construction of $Q$ is indeed effective, and hence $Q$ is a countably generated recursive presentation. 
Also, Proposition \ref{tkill} shows that $\overline{Q}$ is an $A$-torsion-free group, as we have successfully annihilated all the $A$-torsion in the free product factors, and the free product of $A$-torsion-free groups is again $A$-torsion-free. Moreover, $\overline{Q}$ contains an embedded copy of every $A$-torsion-free countably generated recursively presentable group, as the universal $A$-torsion-free quotient of an $A$-torsion-free group is itself. 
\end{proof}

As detailed in \cite[Lemma 6.9 and Theorem 6.10]{Chiodo1}, the following is implicit in Rotman's proof \cite[Theorem 12.18]{Rot} of the Higman Embedding Theorem.

\begin{thm}\label{tor emb}
There is a uniform algorithm that, on input of a countably generated recursive presentation $P=\langle X | R \rangle$, constructs a finite presentation $\T(P)$ such that $\overline{P}\hookrightarrow \overline{\T(P)}$ and  $\torord(\overline{P}) =\torord(\overline{\T(P)})$, along with an explicit embedding $\overline{\phi}: \overline{P} \hookrightarrow \overline{\T(P)}$.
\end{thm}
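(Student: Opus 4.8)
The plan is to verify that the classical proof of the Higman Embedding Theorem, in the form given by Rotman \cite[Theorem 12.18]{Rot}, is (a) effective and uniform in the input presentation, (b) produces an \emph{explicit} embedding, and (c) is built entirely out of group-theoretic operations that do not create torsion of new orders. Granting these three points, the resulting finite presentation is the desired $\T(P)$, and the tracked composite of the maps introduced at each stage is the desired $\overline{\phi}$.

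First I would reduce to the finitely generated case: given a countably generated recursive presentation $P = \langle X \mid R\rangle$, the standard Higman--Neumann--Neumann construction embeds $\overline{P}$ into a $2$-generated recursively presented group by first forming a free product $\overline{P} \ast F$ with a free group of finite rank and then taking a single HNN extension whose stable letter conjugates a free basis onto a suitable set of elements; this is plainly algorithmic, and since both operations are instances of the cases analysed below, it preserves $\torord$. From here one may assume $P$ is a finite-generator recursive presentation. Next I would run through Rotman's construction --- the Higman rope trick, the passage through benign subgroups, and the final assembly of $\overline{\T(P)}$ --- and observe at each step that the operation performed is one of: (i) a free product with a finitely generated free group; (ii) an HNN extension with finitely generated associated subgroups; or (iii) an amalgamated free product over a common subgroup. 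Every construction in the proof comes with explicit data (generators, relators, and the maps between successive groups), so the composite procedure is a single uniform algorithm and the composite embedding is explicit; this handles (a) and (b).

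For (c), the key observation is the behaviour of torsion under these three operations. By the structure theorem for HNN extensions (a consequence of Britton's Lemma), every element of finite order in an HNN extension $H\ast_{\phi}$ is conjugate to an element of the base group $H$; hence $\torord(H\ast_{\phi}) = \torord(H)$, with no new orders appearing and none disappearing, since $H \hookrightarrow H\ast_{\phi}$. Likewise, every element of finite order in a free product $A \ast B$ is conjugate into a factor, so $\torord(A\ast B) = \torord(A)\cup\torord(B)$; in particular $\torord(H\ast F) = \torord(H)$ when $F$ is free. The same conclusion holds for amalgamated free products $A\ast_{C}B$: torsion elements are conjugate into $A$ or $B$. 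Applying these facts inductively along Rotman's construction --- where all the auxiliary groups appearing are free, or are themselves built from $\overline{P}$ by operations of types (i)--(iii) and hence have torsion orders contained in $\torord(\overline{P})$ --- yields $\torord(\overline{\T(P)}) = \torord(\overline{P})$.

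The main obstacle is the bookkeeping in step (c): one must go through each stage of the Higman embedding --- in particular the benign-subgroup machinery and the rope trick, where several auxiliary groups and subgroup identifications are introduced --- and confirm that every group that gets amalgamated or used as an HNN base either is free or has already been shown to have torsion orders within $\torord(\overline{P})$, so that no stage smuggles in an element of a new finite order. Once the construction is laid out explicitly this is a finite, if somewhat tedious, check; it is exactly what is carried out in \cite[Lemma 6.9 and Theorem 6.10]{Chiodo1}, to which we refer for the details.
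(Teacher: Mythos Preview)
Your proposal is correct and follows exactly the approach the paper indicates: the paper does not give a proof of this theorem at all, but simply states that it is implicit in Rotman's proof \cite[Theorem 12.18]{Rot} and refers to \cite[Lemma 6.9 and Theorem 6.10]{Chiodo1} for the details. Your sketch --- reducing to finitely many generators via an HNN-type embedding, then tracking $\torord$ through the free products, amalgams, and HNN extensions comprising the Higman construction --- is precisely the content of that reference, and you end by citing the same source; you have in fact supplied more explanation than the paper itself does.
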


We can now prove our main result:

\begin{thm}\label{uni cbl tor free b}
Let $A$ be an r.e.~set of integers. Then there is a finitely  presentable group $G$ which is $A$-torsion-free, and contains an embedded copy of every countably generated recursively presentable $A$-torsion-free group.
\end{thm}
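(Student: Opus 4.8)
The plan is to combine the two preceding theorems: first produce a countably generated recursive presentation that already carries all the groups we want, then push it inside a finitely presented group via Higman's embedding while keeping control of the torsion.

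First I would invoke Theorem~\ref{uni cbl tor free} to obtain a countably generated recursive presentation $Q$ whose group $\overline{Q}$ is $A$-torsion-free and contains an embedded copy of every countably generated recursively presentable $A$-torsion-free group. Then I would apply Theorem~\ref{tor emb} to the presentation $Q$, obtaining a finite presentation $\T(Q)$ together with an embedding $\overline{\phi}\colon\overline{Q}\hookrightarrow\overline{\T(Q)}$ satisfying $\torord(\overline{Q})=\torord(\overline{\T(Q)})$. I then claim that $G:=\overline{\T(Q)}$ has the required properties.

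The only point that needs a short argument is that $\overline{\T(Q)}$ is itself $A$-torsion-free. For this I would use the characterisation recorded in the introduction: a group $H$ is $A$-torsion-free if and only if $\torord(H)\cap A^{\fac}=\emptyset$. Since $\overline{Q}$ is $A$-torsion-free we have $\torord(\overline{Q})\cap A^{\fac}=\emptyset$, and because $\torord(\overline{\T(Q)})=\torord(\overline{Q})$ by Theorem~\ref{tor emb}, the same equality $\torord(\overline{\T(Q)})\cap A^{\fac}=\emptyset$ holds, so $G$ is $A$-torsion-free.

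For the embedding property: if $H$ is any countably generated recursively presentable $A$-torsion-free group, then $H\hookrightarrow\overline{Q}$ by Theorem~\ref{uni cbl tor free}, and composing this with $\overline{\phi}$ gives $H\hookrightarrow\overline{\T(Q)}=G$. As $G$ is finitely presentable by construction, this finishes the proof. I do not expect any genuine obstacle here, since the substantive work is already contained in Theorems~\ref{uni cbl tor free} and~\ref{tor emb}; the one thing worth highlighting is that $A$-torsion-freeness is detected by the invariant $\torord$, which is exactly what the Higman embedding of Theorem~\ref{tor emb} preserves.
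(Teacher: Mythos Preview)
Your proposal is correct and follows exactly the paper's own proof: construct $Q$ via Theorem~\ref{uni cbl tor free}, apply Theorem~\ref{tor emb} to obtain $\T(Q)$, and set $G:=\overline{\T(Q)}$, using $\torord(\overline{Q})=\torord(\overline{\T(Q)})$ to conclude $A$-torsion-freeness. Your extra line unpacking this last implication via the characterisation $\torord(H)\cap A^{\fac}=\emptyset$ is a welcome clarification but not a departure from the paper's argument.
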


\begin{proof}
 We construct $Q$ as in Theorem \ref{uni cbl tor free}, and then use Theorem \ref{tor emb} to embed $\overline{Q}$ into a finitely presentable group $\overline{\T(Q)}$. By construction, $\torord(\overline{Q})=\torord(\overline{\T(Q)})$, so $\overline{\T(Q)}$ is $A$-torsion-free. Finally, $\overline{\T(Q)}$ has an embedded copy of every countably generated recursively presentable $A$-torsion-free group, since $\overline{Q}$ did. Taking $G$ to be $\overline{\T(Q)}$ completes the proof.
\end{proof}

As all f.p.~groups are recursively presentable, we have the following corollary:

\begin{thm}\label{uni tor free}
Let $A$ be an r.e.~set of integers. Then there is a universal finitely presented $A$-torsion-free group $G$. That is, $G$ is $A$-torsion-free, and for any finitely presented group $H$ we have that $H \hookrightarrow G$ if (and only if) $H$ is $A$-torsion-free.
\end{thm}

The following is an unexpected and very strong generalisation of \cite[Theorem  4.2]{Chiodo3}, classifying the computational complexity of recognising f.p.~$A$-torsion-free groups.

\begin{thm}\label{pi2}
Let $A\subseteq \mathbb{N}_{\geq 2}$ be a non-empty set of integers. Then the set of finite presentations of $A$-torsion-free groups is $\Pi^{0}_{2}$-hard. Moreover, if $A$ is r.e., then this set of presentations is $\Pi^{0}_{2}$-complete.
\end{thm}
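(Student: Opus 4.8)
The plan is to prove the two assertions separately. For the upper bound (when $A$ is r.e.), I would first express the property ``$\overline{P}$ is $A$-torsion-free'' in a form whose quantifier complexity is manifestly $\Pi^0_2$. Recall a finite presentation $P=\langle X\mid R\rangle$ gives an $A$-torsion-free group exactly when for every word $w\in X^*$ and every $n\in A$, if $\overline{w}^{\,n}=e$ in $\overline{P}$ then $\overline{w}=e$ in $\overline{P}$. By Lemma \ref{words}, the relation ``$\overline{v}=e$ in $\overline{P}$'' is r.e.\ (uniformly in $P$), and since $A$ is r.e., the set of pairs $(w,n)$ with $n\in A$ and $\overline{w}^{\,n}=e$ is r.e.\ as well. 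So the condition reads: $\forall (w,n)\,\big[\,(\text{an r.e.\ condition on }(w,n,P))\Rightarrow(\text{an r.e.\ condition on }(w,P))\,\big]$. An implication between two $\Sigma^0_1$ statements is $\Pi^0_1\vee\Sigma^0_1$, hence $\Sigma^0_2$ inside the universal quantifier; prefixing $\forall(w,n)$ and contracting quantifiers of the same type gives a $\Pi^0_2$ description. Thus the set of finite presentations of $A$-torsion-free groups is $\Pi^0_2$.

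For $\Pi^0_2$-hardness, I would reduce from a fixed $\Pi^0_2$-complete set, e.g.\ $\mathrm{Cof}=\{e\mid W_e\text{ is cofinite}\}$, or more conveniently $\mathrm{Tot}=\{e\mid \varphi_e\text{ is total}\}$, using a construction built on Theorem \ref{tor emb} together with Proposition \ref{tkill}. Fix any element $p\in A$ (it exists and $p\ge 2$ since $A\subseteq\mathbb N_{\ge 2}$ is non-empty). Given $e$, I would uniformly construct a countably generated recursive presentation $P_e$ of a group that contains an element of order $p$ ``conditionally'': roughly, include generators $t_1,t_2,\dots$ and relations $t_k^{\,p}=e$ but only ``switch on'' the relator that forces $t_k$ to be non-trivial once the computation $\varphi_e(k)$ halts; if every $\varphi_e(k)$ halts, $\overline{P_e}$ has a genuine order-$p$ element (so is not $A$-torsion-free, since $p\in A^{\fac}$), while if some $\varphi_e(k)$ never halts, the corresponding $t_k$ can be collapsed and one arranges $\overline{P_e}$ to be $A$-torsion-free. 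This is precisely the kind of ``Rips/Boone-style'' gadget used in \cite[Theorem 4.2]{Chiodo3}; the delicate point is ensuring that in the ``bad'' case the group really does have $A$-torsion and in the ``good'' case it has none, uniformly and recursively. Then apply $\T(-)$ from Theorem \ref{tor emb} to $P_e$ to get a finite presentation $\T(P_e)$ with $\torord(\overline{\T(P_e)})=\torord(\overline{P_e})$, so $\overline{\T(P_e)}$ is $A$-torsion-free iff $\overline{P_e}$ is iff $e\in\mathrm{Tot}$. Since $e\mapsto\T(P_e)$ is computable, this is the desired many-one reduction, giving $\Pi^0_2$-hardness (which needs no assumption on $A$ beyond non-emptiness).

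Combining the two parts: for r.e.\ $A$ the set is both $\Pi^0_2$ and $\Pi^0_2$-hard, hence $\Pi^0_2$-complete. The main obstacle is the hardness construction: one must design $P_e$ so that the presence or absence of $A$-torsion is controlled cleanly by the $\Pi^0_2$ predicate, handling the interaction between the iterated torsion-killing of Proposition \ref{tkill} and the ``conditional'' relators, and making sure nothing in the construction accidentally creates or destroys torsion of an order lying in $A^{\fac}$. I expect this to follow the template of \cite[Theorem 4.2]{Chiodo3} closely, with $2$-torsion replaced throughout by $p$-torsion for the chosen $p\in A$; the upper bound, by contrast, is a routine quantifier count once Lemmas \ref{words} and the r.e.-ness of $A$ are in hand.
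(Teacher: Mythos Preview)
Your overall strategy matches the paper's: a $\Pi^0_2$ upper bound by quantifier counting (using Lemma \ref{words} and r.e.-ness of $A$), and $\Pi^0_2$-hardness by building a recursively presented group with controlled $p$-torsion for some fixed $p\in A$, then applying the torsion-preserving Higman embedding of Theorem \ref{tor emb}. The upper bound is fine and essentially identical to the paper's one-line description.

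The hardness argument, however, has a genuine gap. First, the phrase ``switch on the relator that forces $t_k$ to be non-trivial'' is not meaningful: relators can only impose equalities, never non-triviality, so no presentation gadget can \emph{force} $t_k\neq e$. Second, and more seriously, your reduction is pointing the wrong way. As you describe it, ``if every $\varphi_e(k)$ halts, $\overline{P_e}$ has a genuine order-$p$ element \ldots\ while if some $\varphi_e(k)$ never halts \ldots\ $\overline{P_e}$ [is] $A$-torsion-free''; this yields $A$-torsion-free $\Leftrightarrow e\notin\mathrm{Tot}$, which would only show $\Sigma^0_2$-hardness, contradicting your own final line ``iff $e\in\mathrm{Tot}$''. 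Moreover, under any reading in which $t_k^p=e$ is only added when $\varphi_e(k)$ halts, a single halting value of $k$ already introduces $p$-torsion, so the equivalence collapses to a $\Pi^0_1$ condition, not a $\Pi^0_2$ one.

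The fix is exactly what the paper does and is simpler than what you sketch: always impose $t_k^p=e$ for every $k$, and add the \emph{killing} relation $t_k=e$ whenever $\varphi_e(k)$ halts (equivalently, whenever $k\in W_n$). Then $\overline{P_e}$ is a free product of copies of $\mathbb{Z}/p\mathbb{Z}$ indexed by the non-halting inputs, so $\overline{P_e}$ is $A$-torsion-free $\Leftrightarrow$ every $t_k$ is killed $\Leftrightarrow$ $\varphi_e$ is total (resp.\ $W_n=\mathbb{N}$), which is $\Pi^0_2$-complete. Applying $\T(-)$ then finishes as you say. Note also that Proposition \ref{tkill} plays no role here; only Theorem \ref{tor emb} is needed.
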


\begin{proof}
This follows the proofs of \cite[Lemma 6.11]{Chiodo1} and \cite[Theorem 4.2]{Chiodo3}. First, there must be  some element $1<a \in A$. Now, given $n \in \mathbb{N}$, form the recursive presentation $P_{n}:=\langle x_{1}, x_{2}, \ldots \ | \ x_{i}^{a}=e \ \forall i \in \mathbb{N},\ x_{j}=e\ \forall j \in W_{n}\rangle$. Then form the finite presentation $Q_{n}$ using Higman's Embedding Theorem (Theorem \ref{tor emb}) so that $\torord(\overline{Q}_{n})=\torord{\overline{P}}_{n}$. Now note that $\overline{Q}_{n}$ is $A$-torsion-free $\Leftrightarrow$ $\overline{P}_{n}$ is $A$-torsion free $\Leftrightarrow$ $\overline{x}_{i}=e$ for all $i\in \mathbb{N}$ $\Leftrightarrow$ $W_{n}=\mathbb{N}$; the latter being a $\Pi_{2}^{0}$-complete set (\cite[Lemma 4.1]{Chiodo3}). So the set of finite presentations of $A$-torsion-free groups is $\Pi_{2}^{0}$-hard, for any non-empty $A\subseteq \mathbb{N}_{\geq 2}$. 

Moreover, when $A$ is also r.e.,  this set has the following $\Pi_{2}^{0}$ description:
\[
 G \textnormal{ is $A$-torsion-free } \Leftrightarrow (\forall w \in G)(\forall n \in A)(w^{n}\neq_{G}e \textnormal{ or } w =_{G} e)
\]
and is thus $\Pi_{2}^{0}$-complete.
\end{proof}

Note that the recursive presentation $P_{n}$  can be constructed uniformly from $n$ whenever $A$ is r.e.~and non-empty, as to find our $1<a\in A$ we simply begin an enumeration of $A$ and take the first output.

\subsection{$X$-torsion-length}\mbox{}

We finish this section by generalising the notion of torsion length, which was first introduced in \cite[Definition 2.5]{ChiVya}.

\begin{defn}\label{tordefn}
Given $X \subseteq \mathbb{N}$, we define the $X$-\emph{Torsion Length} of $G$, $\torlen^{X}(G)$, by the smallest ordinal $n$ such that $\tor^{X}_{n}(G)=\tor^{X}_{\omega}(G)$. 
\end{defn}

Rather than go and re-work all the theory developed in \cite{ChiVya}, we simply state here the main results \cite[Theorem 3.3 and Theorem 3.10]{ChiVya}, generalised to $X$-torsion. Going through the work in \cite{ChiVya}, it is straightforward to see that \emph{all} results there generalise to $X$-torsion, and thus we refrain from doing so here.

\begin{thm}\label{fp tor len}
Given any $\emptyset \neq X \subseteq \mathbb{N}_{\geq 2}$, there is a family of finite presentations $\{ P_{n}\}_{n \in \mathbb{N}}$ of groups satisfying $\torlen^{X}(\overline{P}_{n})=n$ and $\overline{P}_{n} /  \tor^{X}_{1}(\overline{P}_{n}) \cong \overline{P}_{n-1}$. 
\end{thm}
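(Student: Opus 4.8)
The plan is to recycle the recursive construction underlying \cite[Theorem 3.3 and Theorem 3.10]{ChiVya}, specialised to a single torsion order drawn from $X$, and then to check that at every stage the passage from torsion to $X$-torsion is invisible. Since $\emptyset\neq X\subseteq\mathbb{N}_{\geq 2}$, fix once and for all an element $a\in X$, so $a\geq 2$. I would run the construction of \cite{ChiVya} verbatim, except that every finite cyclic torsion building block is taken to have order $a$, and call the resulting finite presentations $P_{0},P_{1},P_{2},\ldots$; being a finite iteration of free products and HNN extensions (and amalgams) built on top of $\langle x\mid x^{a}\rangle$, each $P_{n}$ is a genuine finite presentation, with $\overline{P}_{0}$ torsion-free. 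As nothing in \cite{ChiVya} uses the order of that building block beyond its being finite and at least $2$, those arguments still yield $\torlen(\overline{P}_{n})=n$ and $\overline{P}_{n}/\tor_{1}(\overline{P}_{n})\cong\overline{P}_{n-1}$; iterating the second identity via $\tor_{i+1}(G)/\tor_{1}(G)=\tor_{i}(G/\tor_{1}(G))$ gives that $\overline{P}_{n}/\tor_{i}(\overline{P}_{n})$ is isomorphic to some $\overline{P}_{j}$ (namely $\overline{P}_{n-i}$ when $i\leq n$, and $\overline{P}_{0}$ when $i\geq n$, at which point the chain $\{\tor_{i}(\overline{P}_{n})\}_{i}$ has stabilised at $\tor_{\omega}(\overline{P}_{n})$).

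Next I would record the torsion order data of this tower. Every torsion element of an amalgamated free product, or of an HNN extension, is conjugate into a vertex group, so a short induction along the construction shows that every torsion element of every $\overline{P}_{m}$ has order dividing $a$. But any divisor $d$ of $a$ with $d\geq 2$ lies in $X^{\fac}$, since $a=d\cdot(a/d)\in X$. Hence in each $\overline{P}_{m}$ the $X$-torsion coincides with the torsion: $\tor^{X}(\overline{P}_{m})=\tor(\overline{P}_{m})$.

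The substantive new step is to promote this coincidence up the iterated hierarchy: $\tor^{X}_{i}(\overline{P}_{n})=\tor_{i}(\overline{P}_{n})$ for all $i$ and $n$, by induction on $i$, the case $i=0$ being trivial. Granting $\tor^{X}_{i}(\overline{P}_{n})=\tor_{i}(\overline{P}_{n})$, Definition \ref{defofXtorsion} forms $\tor^{X}_{i+1}(\overline{P}_{n})$ as the normal closure in $\overline{P}_{n}$ of the preimage of $\tor^{X}\big(\overline{P}_{n}/\tor^{X}_{i}(\overline{P}_{n})\big)=\tor^{X}\big(\overline{P}_{n}/\tor_{i}(\overline{P}_{n})\big)$; since $\overline{P}_{n}/\tor_{i}(\overline{P}_{n})$ is isomorphic to some $\overline{P}_{j}$, the previous paragraph identifies this set with $\tor\big(\overline{P}_{n}/\tor_{i}(\overline{P}_{n})\big)$, whose preimage has normal closure exactly $\tor_{i+1}(\overline{P}_{n})$. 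So $\tor^{X}_{i+1}(\overline{P}_{n})=\tor_{i+1}(\overline{P}_{n})$, closing the induction. Therefore $\tor^{X}_{\omega}(\overline{P}_{n})=\tor_{\omega}(\overline{P}_{n})$, whence $\torlen^{X}(\overline{P}_{n})=\torlen(\overline{P}_{n})=n$, and $\overline{P}_{n}/\tor^{X}_{1}(\overline{P}_{n})=\overline{P}_{n}/\tor_{1}(\overline{P}_{n})\cong\overline{P}_{n-1}$, as required.

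I expect essentially all of the real work to sit in the first paragraph: confirming that the construction of \cite{ChiVya}, and its verification that $\torlen(\overline{P}_{n})=n$ and $\overline{P}_{n}/\tor_{1}(\overline{P}_{n})\cong\overline{P}_{n-1}$, goes through unchanged when the order of the torsion building block is an arbitrary $a\geq 2$ rather than the specific value used there, and that every torsion element produced along the way has order dividing $a$. That is exactly the routine-but-lengthy rereading of \cite{ChiVya} that the authors elect not to reproduce; once it is in hand, the passage from $\tor$ to $\tor^{X}$ above is immediate, which is precisely why \cite{ChiVya} is cited rather than reproved.
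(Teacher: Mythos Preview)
Your proposal is correct and aligns with the paper's treatment, which gives no proof at all beyond asserting that the results of \cite{ChiVya} ``generalise to $X$-torsion'' and declining to reproduce the verification. You in fact supply more than the paper does: rather than re-running every argument of \cite{ChiVya} with $\tor$ replaced by $\tor^{X}$, you observe that once the cyclic building block has order $a\in X$, every torsion element in every $\overline{P}_m$ has order dividing $a$ and hence is $X$-torsion, so $\tor^{X}(\overline{P}_m)=\tor(\overline{P}_m)$; the induction $\tor^{X}_{i}(\overline{P}_n)=\tor_{i}(\overline{P}_n)$ then lets you read off the $X$-torsion conclusions directly from the ordinary-torsion ones already established in \cite{ChiVya}. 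This is a tidy reduction that avoids having to trace $X$-torsion through the internals of the \cite{ChiVya} constructions, and it makes explicit the only genuinely new check (that the quotients $\overline{P}_n/\tor_i(\overline{P}_n)$ stay within the family, so the equality $\tor^{X}=\tor$ persists at every stage).
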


\begin{thm}\label{fp inf torlen}
Given any $\emptyset \neq X \subseteq \mathbb{N}_{\geq 2}$, there exists a $2$-generator recursive presentation $Q$ for which $\torlen^{X}(\overline{Q}) = \omega$. If $X$ is r.e., then we can algorithmically construct such a finite presentation $Q$ from $X$.
\end{thm}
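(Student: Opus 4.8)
The strategy is to reproduce, in the $X$-torsion setting, the proof of \cite[Theorem 3.10]{ChiVya}: first manufacture a countably generated recursive presentation of infinite $X$-torsion length out of the finite presentations of finite $X$-torsion length supplied by Theorem \ref{fp tor len}, and then compress it to two generators.

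\emph{Step 1 (an infinite free product).} By Theorem \ref{fp tor len} there are finite presentations $\{P_n\}_{n\in\mathbb{N}}$ with $\torlen^X(\overline{P}_n)=n$, produced uniformly from $X$ when $X$ is r.e. Form the countably generated recursive presentation $R$ of the free product $\mathbb{Z}*\overline{P}_1*\overline{P}_2*\cdots$, with $\mathbb{Z}$ generated by a chosen element $a$ (the extra free factor is a convenience for Step 2). I claim $\torlen^X(\overline{R})=\omega$. Since every $X$-torsion element of a free product is conjugate into a factor (Kurosh), $\tor^X_1(\overline{R})$ is the normal closure of $\bigcup_n\tor^X_1(\overline{P}_n)$, so $\overline{R}/\tor^X_1(\overline{R})\cong\mathbb{Z}*(\overline{P}_1/\tor^X_1(\overline{P}_1))*\cdots$; iterating gives $\overline{R}/\tor^X_k(\overline{R})\cong\mathbb{Z}*(\overline{P}_1/\tor^X_k(\overline{P}_1))*(\overline{P}_2/\tor^X_k(\overline{P}_2))*\cdots$ for every $k\in\mathbb{N}$. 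For each $k$ the factor $\overline{P}_{k+1}/\tor^X_k(\overline{P}_{k+1})$ has non-trivial $X$-torsion, since $\torlen^X(\overline{P}_{k+1})=k+1>k$; hence $\tor^X_{k+1}(\overline{R})\supsetneq\tor^X_k(\overline{R})$ and $\torlen^X(\overline{R})>k$. As $\torlen^X(\overline{R})\in\mathbb{N}\cup\{\omega\}$ by Definition \ref{tordefn}, we get $\torlen^X(\overline{R})=\omega$. Note also that $a$ has infinite order in every quotient $\overline{R}/\tor^X_k(\overline{R})$, so $\langle a\rangle\cap\tor^X_\omega(\overline{R})=\{e\}$.

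\emph{Step 2 (down to two generators).} Embed $\overline{R}$ into a $2$-generator recursively presented group $\overline{Q}$ via the standard embedding of a countably generated recursively presented group into a $2$-generator one as an iterated HNN extension of $\overline{R}*F$ ($F$ free of rank two) with finitely generated free associated subgroups; this construction is uniform, so $Q$ is produced algorithmically from $X$. For any subgroup of any group one checks by induction on $k$ that $\tor^X_k$ of the subgroup is contained in $\tor^X_k$ of the ambient group, so it suffices to prove the reverse inclusion $\overline{R}\cap\tor^X_k(\overline{Q})\subseteq\tor^X_k(\overline{R})$ for every $k$, i.e. that $\overline{R}/\tor^X_k(\overline{R})$ embeds into $\overline{Q}/\tor^X_k(\overline{Q})$. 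Granting this: if $\torlen^X(\overline{Q})=m<\omega$ then $\overline{Q}/\tor^X_m(\overline{Q})$ is $X$-torsion-free, hence so is its subgroup $\overline{R}/\tor^X_m(\overline{R})$, contradicting $\torlen^X(\overline{R})=\omega$; therefore $\torlen^X(\overline{Q})=\omega$, proving the theorem.

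\emph{Main obstacle.} The real content is the filtration-compatibility claim $\overline{R}\cap\tor^X_k(\overline{Q})=\tor^X_k(\overline{R})$. One must arrange the embedding so that, at each stage of the HNN tower, the defining isomorphism between the free associated subgroups restricts to an isomorphism between their intersections with $\tor^X_k$ of the base (choosing the $\langle a\rangle$ and $F$ coordinates to absorb the generators appropriately), so that the HNN extension descends to a like HNN extension of the quotient by $\tor^X_k$. Since an HNN extension creates no new torsion (by Britton's lemma every torsion element is conjugate into the base), $\overline{Q}/\tor^X_k(\overline{Q})$ is then the same iterated HNN extension built over $\overline{R}/\tor^X_k(\overline{R})$, hence contains a copy of it. Carrying out this bookkeeping, and checking it is uniform in $X$, is the $X$-torsion analogue of the corresponding verification in \cite[Theorem 3.10]{ChiVya}, and is where the work lies.
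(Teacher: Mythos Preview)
Your proposal is aligned with the paper's own treatment: the paper does not prove this theorem at all but simply states it as the $X$-torsion analogue of \cite[Theorem~3.10]{ChiVya}, remarking that ``it is straightforward to see that \emph{all} results there generalise to $X$-torsion, and thus we refrain from doing so here.'' Your sketch carries out exactly that generalisation, and in fact goes further than the paper by isolating the one genuinely delicate point (that the HNN tower used in the $2$-generator embedding must be compatible with the filtration $\tor^X_k$, so that each quotient $\overline{Q}/\tor^X_k(\overline{Q})$ is again an HNN tower over $\overline{R}/\tor^X_k(\overline{R})$).

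One small point to watch: the second sentence of the theorem asserts a \emph{finite} presentation when $X$ is r.e., whereas your Step~2 as written yields only a $2$-generator \emph{recursive} presentation. To match the stated conclusion you need either to observe that the filtration-compatibility argument survives a further pass through the Higman construction of Theorem~\ref{tor emb} (it does, since that construction is likewise built from free products and HNN extensions with free associated subgroups), or to follow the variant in \cite{ChiVya} that produces finite presentability directly. The paper does not spell this out either, so this is not a divergence from the paper so much as a detail both of you leave to \cite{ChiVya}.
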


\section{Presentations}\label{sec:model_proof}

The purpose of this section is to re-prove Theorem \ref{uni cbl tor free} using model-theoretic arguments. We follow the idea in \cite[Theorem A.1]{BS08}.

\subsection{Notation}\mbox{}

Throughout this section $\mathcal{L}_{\mathrm{Grp}}$ will be used to denote the language of group theory, that is the language of first-order logic supplemented with a binary function symbol $\cdot$ whose intended interpretation is the group operation, a constant symbol $e$ whose intended interpretation is the identity element, and a unary function symbol $\Box^{-1}$ whose intended interpretation is the function that sends elements to their inverses. We use the standard abbreviation of writing $x^n$ instead of $\underbrace{x \cdot \cdots \cdot x}_n$. If $X$ is a set of constant symbols that are not in $\mathcal{L}$ endowed with a well-ordering of its elements then we write $\mathcal{L}_X$ for the language obtained by adding the constant symbols in $X$ to $\mathcal{L}$. If $\mathcal{L}$ is a language, $X$ is a set of new constant symbols that are not in $\mathcal{L}$ endowed with an implicit well-ordering, $\mathcal{M}$ is an $\mathcal{L}$-structure, and $A \subseteq \mathcal{M}$ with a canonical bijection witnessing that $|A|= |X|$, then we write $\langle \mathcal{M}, A \rangle$ for the $\mathcal{L}_X$-structure obtained by interpreting the constant symbols in $X$ with the elements of $A$. If $\mathcal{M}$ is an $\mathcal{L}$-structure and $A \subseteq \mathcal{M}$, then we say that $\mathcal{M}$ is generated by $A$ if all of $\mathcal{M}$ is obtained by closing $A$ and the constants of $\mathcal{M}$ under applications of the interpretation of functions from $\mathcal{L}$ in $\mathcal{M}$.

\subsection{Model theory preliminaries}\mbox{}

We begin by recalling some definitions and results from Chapter 9 of \cite{hod93}.

\begin{defn}
Let $\mathcal{L}$ be a language. We say that an $\mathcal{L}_{\infty \infty}$-formula $\phi$ is \emph{basic Horn} if $\phi$ is in the form
$$\bigwedge \Phi \Rightarrow \psi,$$
where $\Phi$ is a, possibly infinite, set of atomic $\mathcal{L}$-formulae, and $\psi$ is either an atomic $\mathcal{L}$-formula or $\bot$. We say that an $\mathcal{L}_{\infty \infty}$-formula $\phi$ is \emph{universal Horn}, and write $\forall_1$ Horn, if $\phi$ is in the form $\forall \vec{x} \theta(\vec{x})$ where $\theta$ is basic Horn. We say that an $\mathcal{L}$-theory $T$ is universal Horn if $T$ has an axiomatisation that only consists of $\forall_1$ Horn sentences. 
\end{defn}

Let $T_{\mathrm{Grp}}$ be the obvious $\mathcal{L}_{\mathrm{Grp}}$-theory that axiomatises the class of groups. It is clear that that $T_{\mathrm{Grp}}$ can be written as a finite set of finitary $\forall_1$ Horn sentences.

\begin{defn}\label{Df:XTorsionFreeTheory}
Let $X \subseteq \mathbb{N}$. We write $T_{X-\tf}$ for the $\mathcal{L}_{\mathrm{Grp}}$-theory with axioms:
$$T_{\mathrm{Grp}} \cup\{ \forall x(x^n = e \Rightarrow x=e ) \mid n \in X \}.$$ 
\end{defn}

It is clear that for all $X \subseteq \mathbb{N}$, $T_{X-\tf}$ is a finitary universal Horn theory that axiomatises the class of $X$-torsion-free groups.  

\begin{defn}
Let $\mathcal{L}$ be a language and let $\mathbb{K}$ be a class of $\mathcal{L}$-structures. A \emph{$\mathbb{K}$-presentation} is a tuple $\langle X, \Phi \rangle_{\mathbb{K}}$ such that $X$ is a set of new constant symbols endowed with an implicit well-ordering, called generators, that are not in $\mathcal{L}$, and $\Phi$ is a set of atomic $\mathcal{L}_X$-sentences. We will write $\langle X, \Phi \rangle$ instead of $\langle X, \Phi \rangle_{\mathbb{K}}$ when $\mathbb{K}$ is clear from the context. If both $\Phi$ and $X$ are finite then we say that $\langle X, \Phi \rangle_{\mathbb{K}}$ is a \emph{finitely presented} $\mathbb{K}$-presentation. If both $\Phi$ and $X$ are r.e. then we say that $\langle X, \Phi \rangle_{\mathbb{K}}$ is a \emph{recursively presented} $\mathbb{K}$-presentation.    
\end{defn}

\begin{defn}
Let $\mathcal{L}$ be a language and let $\mathbb{K}$ be a class of $\mathcal{L}$-structures. Let $\langle X, \Phi \rangle_{\mathbb{K}}$ be a $\mathbb{K}$-presentation. We say that an $\mathcal{L}_X$-structure $\langle \mathcal{M}, A \rangle$ is a \emph{model} of $\langle X, \Phi \rangle_{\mathbb{K}}$ if 
$$\mathcal{M} \in \mathbb{K} \textrm{ and } \langle \mathcal{M}, A \rangle \models \bigwedge \Phi.$$
\end{defn}

\begin{defn} \label{Df:Presents}
Let $\mathcal{L}$ be a language and let $\mathbb{K}$ be a class of $\mathcal{L}$-structures. Let $\langle X, \Phi \rangle_{\mathbb{K}}$ be a $\mathbb{K}$-presentation. We say that $\langle X, \Phi \rangle_{\mathbb{K}}$ \emph{presents} an $\mathcal{L}_X$-structure $\langle \mathcal{M}, A \rangle$ if 
\begin{itemize}
\item[(i)] $\langle \mathcal{M}, A \rangle$ is a model of $\langle X, \Phi \rangle_{\mathbb{K}}$,
\item[(ii)] $\mathcal{M}$ is generated by $A$,
\item[(iii)] for every model $\langle \mathcal{N}, B \rangle$ of $\langle X, \Phi \rangle_{\mathbb{K}}$, there exists a homomorphism $f: \mathcal{M} \longrightarrow \mathcal{N}$ such that $f(c^\mathcal{M})= c^\mathcal{N}$ for all $c \in X$. 
\end{itemize}
We say that $\mathbb{K}$ \emph{admits presentations} if every $\mathbb{K}$-presentation presents a structure $\langle \mathcal{M}, A \rangle$ with $\mathcal{M} \in \mathbb{K}$.   
\end{defn}

Note that if $\langle X, \Phi \rangle$ presents $\langle \mathcal{M}, A \rangle$ and $\langle \mathcal{N}, B \rangle$ is a model of $\langle X, \Phi \rangle$, then, since $\mathcal{M}$ is generated by $A$, the homomorphism whose existence is guaranteed by Definition \ref{Df:Presents}(iii) is unique.

The following is Lemma 9.2.1 of \cite{hod93}:

\begin{lem} \label{Th:ConditionForPresents}
Let $\Phi$ be a set of atomic $\mathcal{L}_X$-sentences where $X$ is a set of constants not in $\mathcal{L}$. Let $\langle X, \Phi \rangle$ be a $\mathbb{K}$-presentation and let $\langle \mathcal{M}, A \rangle$ be an $\mathcal{L}_X$-structure with $\mathcal{M} \in \mathbb{K}$. The following are equivalent:
\begin{itemize}
\item[(i)] $\langle X, \Phi \rangle$ presents $\langle \mathcal{M}, A \rangle$, 
\item[(ii)] $A$ generates $\mathcal{M}$; and for every atomic formula $\psi(\vec{x})$ of $\mathcal{L}$ and for every $\vec{a} \in A$,
$$\mathcal{M} \models \psi(\vec{a}) \textrm{ if and only if every structure in } \mathbb{K} \textrm{ is a model of } \forall \vec{x} \left( \bigwedge \Phi \Rightarrow \psi \right).$$
\end{itemize}
\end{lem}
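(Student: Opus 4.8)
The plan is to prove the two implications directly from Definition~\ref{Df:Presents}, leaning on just two elementary facts about atomic formulas: homomorphisms preserve them, and if $A$ generates $\mathcal{M}$ then every element of $\mathcal{M}$ has the form $t^{\mathcal{M}}(\vec{a})$ for some $\mathcal{L}$-term $t$ and tuple $\vec{a}$ from $A$. Before starting I would fix notation: for an atomic $\mathcal{L}_X$-sentence $\phi$, let $\phi(\vec{x})$ be the atomic $\mathcal{L}$-formula obtained by replacing each $X$-constant occurring in $\phi$ with a distinct variable, this substitution applied uniformly across all of $\Phi$ and across $\psi$, so that $\forall\vec{x}(\bigwedge\Phi\Rightarrow\psi)$ makes sense; for an $\mathcal{L}_X$-structure $\langle\mathcal{N},B\rangle$ I write $\vec{b}$ for the interpretation of those $X$-constants, and $\bigwedge\Phi(\vec{b})$ for $\bigwedge_{\phi\in\Phi}\phi(\vec{b})$.

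For \textbf{(i) $\Rightarrow$ (ii)}, I would assume $\langle X,\Phi\rangle$ presents $\langle\mathcal{M},A\rangle$. That $A$ generates $\mathcal{M}$ is Definition~\ref{Df:Presents}(ii). Fixing an atomic $\psi(\vec{x})$ and $\vec{a}$ from $A$: if every structure in $\mathbb{K}$ models $\forall\vec{x}(\bigwedge\Phi\Rightarrow\psi)$ then so does $\mathcal{M}$, and since $\langle\mathcal{M},A\rangle\models\bigwedge\Phi$ by Definition~\ref{Df:Presents}(i), instantiating the variables at $\vec{a}$ gives $\mathcal{M}\models\psi(\vec{a})$. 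For the converse, supposing $\mathcal{M}\models\psi(\vec{a})$, I would take an arbitrary $\mathcal{N}\in\mathbb{K}$ and an arbitrary tuple $\vec{b}$ in $\mathcal{N}$ with $\mathcal{N}\models\bigwedge\Phi(\vec{b})$; interpreting the $X$-constants by $\vec{b}$ turns $\langle\mathcal{N},B\rangle$ into a model of $\langle X,\Phi\rangle_{\mathbb{K}}$, so Definition~\ref{Df:Presents}(iii) hands me a homomorphism $f\colon\mathcal{M}\to\mathcal{N}$ with $f(\vec{a})=\vec{b}$ componentwise, and preservation of atomic formulas by homomorphisms gives $\mathcal{N}\models\psi(\vec{b})$. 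As $\mathcal{N}$ and $\vec{b}$ were arbitrary, every structure in $\mathbb{K}$ models $\forall\vec{x}(\bigwedge\Phi\Rightarrow\psi)$.

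For \textbf{(ii) $\Rightarrow$ (i)}, I would assume (ii) and verify the three clauses of Definition~\ref{Df:Presents}. Clause (ii) is part of (ii). For clause (i): each $\phi\in\Phi$ equals $\phi(\vec{c})$ for the tuple $\vec{c}$ of its $X$-constants, and $\forall\vec{x}(\bigwedge\Phi\Rightarrow\phi(\vec{x}))$ is logically valid since $\phi(\vec{x})$ is a conjunct of $\bigwedge\Phi$; hence every structure in $\mathbb{K}$ models it, so (ii) gives $\mathcal{M}\models\phi(\vec{a})$, i.e.\ $\langle\mathcal{M},A\rangle\models\phi$, and therefore $\langle\mathcal{M},A\rangle\models\bigwedge\Phi$; with $\mathcal{M}\in\mathbb{K}$ this makes it a model of $\langle X,\Phi\rangle$. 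For clause (iii): given a model $\langle\mathcal{N},B\rangle$ of $\langle X,\Phi\rangle_{\mathbb{K}}$, I would write each $m\in\mathcal{M}$ as $t^{\mathcal{M}}(\vec{a})$ using that $A$ generates $\mathcal{M}$, and set $f(m):=t^{\mathcal{N}}(\vec{b})$. Well-definedness is where (ii) does the real work: $t_1^{\mathcal{M}}(\vec{a})=t_2^{\mathcal{M}}(\vec{a})$ means $\mathcal{M}\models(t_1=t_2)(\vec{a})$, so by (ii) every structure in $\mathbb{K}$ — in particular $\mathcal{N}$, which satisfies $\bigwedge\Phi(\vec{b})$ — models $\forall\vec{x}(\bigwedge\Phi\Rightarrow t_1=t_2)$, forcing $t_1^{\mathcal{N}}(\vec{b})=t_2^{\mathcal{N}}(\vec{b})$. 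Compatibility of $f$ with the function symbols and constants of $\mathcal{L}$ is then compositionality of term evaluation, compatibility with relation symbols $R$ is the same application of (ii) to atomic formulas $R(\vec{t})$, and $f(c^{\mathcal{M}})=c^{\mathcal{N}}$ comes from taking $t$ to be the variable attached to $c$.

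The hard part will be exactly this well-definedness of $f$ in the step (ii)~$\Rightarrow$~(i): it is the single place where hypothesis (ii) is actually invoked, and its content is precisely that any equation holding in the generated structure $\mathcal{M}$ is already a consequence of $\Phi$ over the whole class $\mathbb{K}$, hence survives passing to any model. Everything else is routine bookkeeping — chiefly keeping the replacement of $X$-constants by variables uniform between $\Phi$ and $\psi$, and observing that the argument only ever uses the assignment $X\to\mathcal{N}$ attached to a witnessing tuple, so no injectivity of that assignment is needed.
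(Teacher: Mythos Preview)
Your proof is correct; this is essentially the standard argument (as in Hodges, Lemma~9.2.1), and the paper itself gives no proof of this lemma, merely citing \cite{hod93}. There is nothing to compare against, but your write-up is sound and the identification of well-definedness of $f$ as the place where (ii) does the work is exactly right.
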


And this is Lemma 9.2.2 of \cite{hod93}:

\begin{lem} \label{Th:ConditionAdmitsPresentations}
Let $\mathcal{L}$ be a language and let $\mathbb{K}$ be a class of $\mathcal{L}$-structures which is closed under isomorphic copies. The following are equivalent:
\begin{itemize}
\item[(i)] $\mathbb{K}$ is closed under products, $\mathbf{1}$ and substructures,
\item[(ii)] $\mathbb{K}$ admits presentations,
\item[(iii)] $\mathbb{K}$ is axiomatised by a universal Horn theory in the language $\mathcal{L}_{\infty \infty}$.
\end{itemize}
\end{lem}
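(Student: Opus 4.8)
The plan is to prove the cycle of implications (iii)$\Rightarrow$(i)$\Rightarrow$(ii)$\Rightarrow$(iii): the first link is a routine preservation argument, while the other two are the substantive parts, carried out via a ``generic'' product construction and a positive-atomic-diagram argument respectively. For (iii)$\Rightarrow$(i), I would first record the standard preservation behaviour of a $\forall_1$ Horn $\mathcal{L}_{\infty\infty}$-sentence $\forall\vec{x}\,(\bigwedge\Phi\Rightarrow\psi)$. A conjunction of atomic formulas holds in a direct product $\prod_{i}\mathcal{N}_{i}$ under an assignment precisely when it holds in each factor under the projected assignment, and from this one checks directly that if $\bigwedge\Phi\Rightarrow\psi$ holds in every $\mathcal{N}_{i}$ then it holds in $\prod_{i}\mathcal{N}_{i}$, hence so does its universal closure; the case of the empty product (or, alternatively, the observation that every atomic $\mathcal{L}$-sentence is true in $\mathbf{1}$) gives $\mathbf{1}\models\forall\vec{x}\,(\bigwedge\Phi\Rightarrow\psi)$; and universal sentences descend to substructures. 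So any class axiomatised by a $\forall_1$ Horn theory is closed under products, contains $\mathbf{1}$, and is closed under substructures.

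For (i)$\Rightarrow$(ii), I would fix an arbitrary $\mathbb{K}$-presentation $\langle X,\Phi\rangle$, set $\kappa=|X|+|\mathcal{L}|+\aleph_{0}$, and choose a set $\{\langle\mathcal{N}_{i},B_{i}\rangle\}_{i\in I}$ of representatives, up to isomorphism fixing $X$ pointwise, of all models $\langle\mathcal{N},B\rangle$ of $\langle X,\Phi\rangle$ with $\mathcal{N}\in\mathbb{K}$, $|\mathcal{N}|\le\kappa$, and $\mathcal{N}$ generated by $B$. This set is non-empty since $\langle\mathbf{1},\{*\}\rangle$ is always such a model (every atomic $\mathcal{L}_{X}$-sentence holds in $\mathbf 1$) and $\mathbf{1}\in\mathbb{K}$ by (i). Let $\mathcal{P}=\prod_{i\in I}\mathcal{N}_{i}$, interpret each $c\in X$ in $\mathcal{P}$ as the tuple $(c^{\mathcal{N}_{i}})_{i}$, let $A$ be the set of these interpretations, and let $\mathcal{M}$ be the substructure of $\mathcal{P}$ generated by $A$. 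Then $\mathcal{M}\in\mathbb{K}$ by (i) (a substructure of a product of members of $\mathbb{K}$), $\mathcal{M}$ is generated by $A$, and $\langle\mathcal{M},A\rangle\models\bigwedge\Phi$ because each factor does and a conjunction of atomic sentences passes into products and down to substructures. For the universal property, given any model $\langle\mathcal{N},B\rangle$ of $\langle X,\Phi\rangle$ with $\mathcal{N}\in\mathbb{K}$, the substructure $\mathcal{N}'$ of $\mathcal{N}$ generated by $B$ lies in $\mathbb{K}$, is generated by $B$, still satisfies $\bigwedge\Phi$, and has cardinality at most $\kappa$, so $\langle\mathcal{N}',B\rangle$ is isomorphic over $X$ to some $\langle\mathcal{N}_{i},B_{i}\rangle$; then the composite of $\mathcal{M}\hookrightarrow\mathcal{P}$ with the projection $\pi_{i}:\mathcal{P}\to\mathcal{N}_{i}$ and the identification $\mathcal{N}_{i}\cong\mathcal{N}'\hookrightarrow\mathcal{N}$ is a homomorphism carrying each $c^{\mathcal{M}}$ to $c^{\mathcal{N}}$. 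Thus $\langle X,\Phi\rangle$ presents $\langle\mathcal{M},A\rangle$, and since the presentation was arbitrary, $\mathbb{K}$ admits presentations.

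For (ii)$\Rightarrow$(iii), let $T$ be the set of all $\forall_1$ Horn $\mathcal{L}_{\infty\infty}$-sentences true throughout $\mathbb{K}$; trivially $\mathbb{K}\models T$, so it suffices to show $\mathcal{N}\models T$ implies $\mathcal{N}\in\mathbb{K}$. Given such $\mathcal{N}$, take a set $X=\{c_{a}\mid a\in\mathcal{N}\}$ of new constants and let $\Phi$ be the positive atomic diagram of $\mathcal{N}$, that is, all atomic $\mathcal{L}_{X}$-sentences true in $\langle\mathcal{N},\mathcal{N}\rangle$. By (ii), $\langle X,\Phi\rangle$ presents some $\langle\mathcal{M},A\rangle$ with $\mathcal{M}\in\mathbb{K}$. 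The assignment $a\mapsto c_{a}^{\mathcal{M}}$ defines a surjective homomorphism $h:\mathcal{N}\to\mathcal{M}$ (well-defined and homomorphic because $\Phi$ records every function value and relation holding in $\mathcal{N}$; surjective because $A=h(\mathcal{N})$ generates $\mathcal{M}$). To upgrade $h$ to an isomorphism, I would invoke Lemma \ref{Th:ConditionForPresents}(ii): whenever $\mathcal{M}\models\psi(c_{a_1}^{\mathcal{M}},\ldots,c_{a_k}^{\mathcal{M}})$ for an atomic $\psi$, every structure in $\mathbb{K}$ models the $\forall_1$ Horn $\mathcal{L}_{\infty\infty}$-sentence $\forall\vec{x}\,(\bigwedge\Phi[\vec{c}\mapsto\vec{x}]\Rightarrow\psi[\vec{c}\mapsto\vec{x}])$, so this sentence lies in $T$ and hence holds in $\mathcal{N}$; as $\mathcal{N}$ satisfies $\bigwedge\Phi$ under the assignment $x_{a}\mapsto a$, it follows that $\mathcal{N}\models\psi(a_{1},\ldots,a_{k})$. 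Taking $\psi$ to be an equation $x=y$ yields injectivity of $h$, and the general statement says $h$ reflects all atomic formulas; a surjective homomorphism reflecting atomic formulas is an isomorphism, so $\mathcal{N}\cong\mathcal{M}\in\mathbb{K}$ and, $\mathbb{K}$ being closed under isomorphic copies, $\mathcal{N}\in\mathbb{K}$. Hence $\mathbb{K}=\mathrm{Mod}(T)$.

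I expect the main obstacle to be making the two nontrivial constructions interlock correctly. In (i)$\Rightarrow$(ii) one must be sure the bound $\kappa$ really captures every $X$-generated model of the presentation up to isomorphism over $X$, so that the single product $\mathcal{P}$ is genuinely generic; and in (ii)$\Rightarrow$(iii) the essential use of $\mathcal{L}_{\infty\infty}$ is that a possibly infinite $\Phi$ together with a possibly infinite block of variables $\vec{x}$ still produces a legitimate single $\forall_1$ Horn sentence, without which neither Lemma \ref{Th:ConditionForPresents} nor the definition of $T$ would deliver what is needed. The preservation bookkeeping in (iii)$\Rightarrow$(i) and the verification that $h$ preserves atomic formulas are routine.
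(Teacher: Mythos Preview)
Your proof is correct and follows the standard route (essentially that of Hodges, Lemma~9.2.2). Note, however, that the paper does not supply its own proof of this lemma at all: it is simply quoted from \cite{hod93} and used as a black box, so there is nothing in the paper to compare your argument against beyond the citation itself.
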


\subsection{A proof of Theorem \ref{uni cbl tor free} using model theory}\mbox{}

The following is an adaptation of the proof  of Theorem A.1 in  \cite{BS08}.

\begin{thm} \label{Th:GeneralUniversalGroupExistenceTheorem}
If $T \supseteq T_{\mathrm{Grp}}$ is an r.e.~universal Horn $\mathcal{L}_{\mathrm{Grp}}$-theory then there exists a recursively presented group $G \models T$ such that every recursively presented group $H \models T$ embeds into $G$.  
\end{thm}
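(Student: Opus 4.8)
The plan is to imitate the free-product-of-all-quotients strategy of Theorem~\ref{uni cbl tor free}, but carried out in the category of models of $T$ rather than in the category of $X$-torsion-free groups. First I would observe that, since $T \supseteq T_{\mathrm{Grp}}$ is a universal Horn $\mathcal{L}_{\mathrm{Grp}}$-theory, the class $\mathbb{K}$ of models of $T$ is closed under products, the trivial group $\mathbf{1}$, and substructures; hence by Lemma~\ref{Th:ConditionAdmitsPresentations} the class $\mathbb{K}$ admits presentations. Thus for every $\mathbb{K}$-presentation $\langle X, \Phi \rangle_{\mathbb{K}}$ there is a group $\overline{\langle X, \Phi\rangle}_{\mathbb{K}} \in \mathbb{K}$ presented by it, and by Lemma~\ref{Th:ConditionForPresents} its word problem is controlled by provability of $\forall \vec{x}(\bigwedge \Phi \Rightarrow \psi)$ across all of $\mathbb{K}$.

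Next I would make this effective. Enumerate all recursive $\mathbb{K}$-presentations $\langle X_1, \Phi_1\rangle, \langle X_2, \Phi_2\rangle, \ldots$ on disjoint (countable) generating sets; this enumeration exists because a recursive $\mathbb{K}$-presentation is just a pair of r.e.\ sets, one of new constants and one of atomic $\mathcal{L}_X$-sentences, so we may dovetail over all pairs of indices of r.e.\ sets. The key effectivity point is that the word problem of each $\overline{\langle X_i, \Phi_i\rangle}_{\mathbb{K}}$ is r.e., uniformly in $i$: by Lemma~\ref{Th:ConditionForPresents}, an equation between two words in the generators holds in the presented group if and only if the corresponding universal Horn sentence is a consequence of $T$ together with $\bigwedge \Phi_i$ (an r.e.\ set of hypotheses, since $T$ is r.e.\ and $\Phi_i$ is r.e.), which is semi-decidable by completeness of first-order logic (and in fact one only needs finitely many hypotheses from $\Phi_i$ for any given proof). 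Hence each $\overline{\langle X_i, \Phi_i\rangle}_{\mathbb{K}}$ has a recursive presentation in the usual group-theoretic sense, obtained uniformly from $i$ by listing all consequences of $T \cup \bigwedge \Phi_i$ of the form $w = e$.

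Now form $G := \overline{\langle X_1, \Phi_1\rangle}_{\mathbb{K}} * \overline{\langle X_2, \Phi_2\rangle}_{\mathbb{K}} * \cdots$, the countably infinite free product; since each factor has a recursive presentation obtained uniformly in $i$, this free product is recursively presented. To see $G \models T$: the class $\mathbb{K}$ of models of a universal Horn theory containing $T_{\mathrm{Grp}}$ is closed under free products — this is the point where I would need to check that universal Horn sentences (in particular the axioms of $T$ beyond $T_{\mathrm{Grp}}$) pass to free products of groups; one argues that a normal-form element of a free product that would violate a universal Horn axiom projects to a violation in one of the factors, using that free product factors are retracts. Finally, every recursively presented group $H \models T$ is itself presented by some recursive $\mathbb{K}$-presentation (take the generators and the atomic diagram of a recursive presentation of $H$, which is r.e.), hence $H \cong \overline{\langle X_i, \Phi_i\rangle}_{\mathbb{K}}$ for some $i$ by the uniqueness clause following Definition~\ref{Df:Presents}, and therefore $H$ embeds into $G$ as the $i$-th free factor. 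The main obstacle I anticipate is the closure-under-free-products step for an arbitrary universal Horn extension $T$ of $T_{\mathrm{Grp}}$: closure under products and substructures is automatic from Lemma~\ref{Th:ConditionAdmitsPresentations}, but free products are neither, so one must argue separately (presumably via normal forms, or via the observation that a free product of groups in $\mathbb{K}$ embeds in an unrestricted product of the factors, each of which lies in $\mathbb{K}$, together with substructure-closure) that universal Horn sentences survive the free product construction.
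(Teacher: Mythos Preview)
Your approach has a genuine gap at the step you yourself flag as the main obstacle: the class of models of a universal Horn $\mathcal{L}_{\mathrm{Grp}}$-theory extending $T_{\mathrm{Grp}}$ need \emph{not} be closed under free products of groups. The theory of abelian groups is universal Horn (the axiom $\forall x\,\forall y\,(xy=yx)$ has empty antecedent and atomic consequent), yet $\mathbb{Z}*\mathbb{Z}$ is nonabelian. Both of your suggested repairs fail on this example: the retract/projection argument breaks because a witness $(a,b)$ to noncommutativity with $a\in G_1$, $b\in G_2$ projects to a commuting pair in each factor; and the embed-into-a-product idea fails because any product of abelian groups is abelian, so no nonabelian free product can embed there. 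Hence your $G=\bigast_i \overline{\langle X_i,\Phi_i\rangle}_{\mathbb{K}}$ may fail to satisfy $T$.

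The paper sidesteps this by never forming a free product in the category of groups. Instead it takes the single presentation $\pi$ obtained as the disjoint union of all recursive presentations and lets $G:=G^\pi$ be the structure this presents \emph{in $\mathbb{K}$}; then $G\models T$ is automatic. The work shifts to showing that $G^\pi$ is recursively presented as an ordinary group: by Lemma~\ref{Th:ConditionForPresents}, an atomic sentence holds in $G^\pi$ iff it is provable from $T\cup\Phi$, and since both $T$ and $\Phi$ are r.e.\ this gives an r.e.\ set of defining relations. The embeddings of the factors into $G^\pi$ come from the universal property (each $G^{\pi_m}$ is a retract of $G^\pi$ via the product $\prod_n G^{\pi_n}$), not from free-product normal forms. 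In short, the correct coproduct to take is the one in $\mathbb{K}$, not in $\mathrm{Grp}$; your effectivity argument via Lemma~\ref{Th:ConditionForPresents} is exactly the right idea, but it should be applied once to $G^\pi$ rather than factor-by-factor followed by a group-theoretic free product.
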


\begin{rem} A group property $\rho$ having an r.e.~universal Horn theory does \emph{not} imply that the finite presentations of groups with $\rho$ are r.e. Indeed, for $X$ a non-empty r.e.~set, $T_{X-\tf}$ is an r.e.~universal Horn theory (Definition \ref{Df:XTorsionFreeTheory}), but by Theorem \ref{pi2} the set of finite presentations of such groups is $\Pi^{0}_{2}$-complete.
\end{rem}

\begin{proof}
Let $T \supseteq T_{\mathrm{Grp}}$ be a r.e.~universal Horn $\mathcal{L}_{\mathrm{Grp}}$-theory. Let $\mathbb{K}$ be the class of $\mathcal{L}_{\mathrm{Grp}}$-structures that satisfy $T$. Let $\mathbb{K}^\prime$ be the class of $\mathcal{L}_{\mathrm{Grp}}$-structures that satisfy $T_{\mathrm{Grp}}$. By Lemma \ref{Th:ConditionAdmitsPresentations}, both $\mathbb{K}$ and $\mathbb{K}^\prime$ admit presentations. If $\tau$ is a presentation, then we will write $G^\tau$ for the element of $\mathbb{K}$ presented by $\tau$, and $G_\tau$ for the element of $\mathbb{K}^\prime$ presented by $\tau$. Let $\langle \pi_n \mid n \in \mathbb{N} \rangle$ be effective enumeration of recursive presentations. Let $\pi$ be the disjoint union of all the $\pi_n$'s; $\pi:=\pi_{1}*\pi_{2}*\cdots$. Therefore, $\pi$ is a recursive presentation. We claim that $G^\pi$ is the desired universal group satisfying $T$. It is immediate that $G^\pi \models T$. If $\tau$ is a recursive presentation such that $G_\tau \models T$ then $G_\tau= G^\tau$, and so $G_\tau$ embeds into $G^\pi$. This shows that $G^\pi$ is universal. It remains to show that $G^\pi$ is recursively presented in $\mathbb{K}^\prime$. Let $\pi= \langle Y, \Phi \rangle$ and let $Y^{G^\pi}$ be the interpretations of the constant symbols $Y$ in $G^\pi$. Let $\Phi^\prime$ be the set of atomic $\mathcal{L}_{\mathrm{Grp}, Y}$-sentences (the language obtained by adding new constants for the generators in $Y$) that hold in $\langle G^\pi, Y^{G^\pi} \rangle$. Lemma \ref{Th:ConditionForPresents} implies that $\langle Y, \Phi^\prime \rangle_{\mathbb{K}^\prime}$ presents $\langle G^\pi, Y^{G^\pi} \rangle$. We need to show is that $\Phi^\prime$ is r.e. Let $S= T \cup \Phi$. We claim that for all atomic $\mathcal{L}_{\mathrm{Grp}, Y}$-sentences $\sigma$,
\begin{equation} \label{Eq:Equation1}
S \vdash \sigma \textrm{ if and only if } \sigma \in \Phi^\prime.
\end{equation}
Since $G^\pi \models S$, it follows that for all atomic $\mathcal{L}_{\mathrm{Grp}, Y}$-sentences $\sigma$, if $S \vdash \sigma$ then $\sigma \in \Phi^\prime$. We need to show the converse. Let $\sigma \in \Phi^\prime$, and suppose that $S \nvdash \sigma$. Note that, since $\sigma$ is atomic, it is of the form $w_1= w_2$ where $w_1$ and $w_2$ are words in the generators $Y$. Let $H$ be a group such that $Y^H$ is the interpretation of the constant symbols $Y$ in $H$, $\langle H, Y^H \rangle \models S$, and $H \models (w_1 \neq w_2)$. Therefore $\langle H, Y^H \rangle$ is a model of $\langle Y, \Phi \rangle_{\mathbb{K}}$. It follows from (iii) of Definition \ref{Df:Presents} that the map $g: Y^{G^\pi} \longrightarrow Y^H$ defined by the implicit ordering on $Y$ must lift to a homomorphism $f: G^\pi \longrightarrow H$. But this map is not well-defined since $w_1=w_2$ in $G^\pi$ and $f(w_1) \neq f(w_2)$ in $H$, which is a contradiction. Therefore (\ref{Eq:Equation1}) holds. Since $S$ is r.e., (\ref{Eq:Equation1}) yields a recursive enumeration of the elements of $\Phi^\prime$. 
\end{proof}

If $X \subseteq \mathbb{N}$ is r.e.~then Definition \ref{Df:XTorsionFreeTheory} shows that $T_{X-\tf}$ is an r.e.~universal Horn theory that extends $T_{\mathrm{Grp}}$. This immediately re-proves Theorem \ref{uni cbl tor free}. 
It is then an application of Theorem \ref{tor emb} again, to re-prove Theorem \ref{uni tor free}. This time, we have done most of the work using model-theoretic techniques and in a way which could be applied  to other  group properties (not just $X$-torsion-freeness), as discussed in the introduction.

\ 

\noindent \scriptsize{\textsc{Department of Pure Mathematics and Mathematical Statistics, University of Cambridge,
\\Wilberforce Road, Cambridge, CB3 0WB, UK. 
\\mcc56@cam.ac.uk
\vspace{5pt}
\\Department of Philosophy, Linguistics and Theory of Science, Gothenburg University,
\\Olof Wijksgatan 6, 405 30 Gothenburg, Sweden. 
\\zachiri.mckenzie@gu.se}


\begin{thebibliography}{99}

\bibitem{BS08} I. Belegradek, A. Szczepa\'{n}ski,  \emph{Endomorphisms of relatively hyperbolic groups} with an appendix by O. Belegradek, Internat. J. Algebra Comput., \textbf{18}, No.~1, 97--110 (2008).

\bibitem{BrodHow} S. D. Brodsky, J. Howie, \emph{The universal torsion-free image of a group}, Israel J. Math. \textbf{98}, 209--228 (1997).

\bibitem{Chiodo1} M. Chiodo, \emph{Finding non-trivial elements and splittings in groups}, J. Algebra. \textbf{331}, 271--284 (2011).


\bibitem{Chiodo3} M. Chiodo, \emph{On torsion in finitely presented groups}, Groups Complex. Cryptol. (2) \textbf{6}, 1--8 (2014).

\bibitem{ChiVya} M.~Chiodo, R.~Vyas, \emph{A note on torsion length}, Comm.~Algebra, \textbf{43}, No.~11,  4825--4835 (2015).



\bibitem{Hig} G.~Higman, \emph{Subgroups of finitely presented groups}, Proc. Roy. Soc. Ser. A \textbf{262}, $455$--$475$ ($1961$).

\bibitem{hod93} W.~Hodges, \emph{Model Theory}. Encyclopedia of mathematics and its applications. No. 42. Cambridge University Press, 1993. 

\bibitem{Rogers} H. Rogers Jr, \emph{Theory of recursive functions and effective computability}, MIT Press, (1987).


\bibitem{Rot} J. Rotman, \emph{An introduction to the theory of groups}, Springer-Verlag, New York, (1995).

\end{thebibliography}
\end{document}